\documentclass[11pt,english]{article}

\usepackage[T1]{fontenc}
\usepackage[utf8]{inputenc}
\usepackage[english]{babel}

\usepackage{a4wide}
\usepackage{authblk}
\usepackage{amsthm,amsmath,amsfonts,amssymb}
\usepackage{latexsym}
\usepackage{enumitem}

\usepackage{graphicx}
\usepackage{tikz}
\usetikzlibrary{decorations.pathreplacing,decorations.markings}
\tikzset{
  mid arrow/.style={postaction={decorate,decoration={
        markings,
        mark=at position .6 with {\arrow[#1]{stealth}}
      }}},
}

\usepackage{mathtools}
\mathtoolsset{showonlyrefs}

\def\centerarc[#1](#2)(#3:#4:#5)
    { \draw[#1] ($(#2)+({#5*cos(#3)},{#5*sin(#3)})$) arc (#3:#4:#5); }


\usepackage{times}
\usepackage{array}
\usepackage{hhline}
\usepackage{changes}
\colorlet{Changes@Color}{orange}

\numberwithin{equation}{section}

\theoremstyle{plain}
\newtheorem{theorem}{Theorem}[section]
\newtheorem{proposition}{Proposition}[section]
\newtheorem{example}{Example}[section]

\theoremstyle{definition}

\theoremstyle{remark}
\newtheorem{remark}{Remark}[section]

\def\d{\partial}

\def\dist{\hbox{dist}\,}
\def\Lip{\mathrm{Lip}}
\newcommand{\di}{\mathrm{div}\,}
\def\supp{\hbox{supp}\,}
\def\tr{\hbox{tr}\,}

\newcommand{\sgn}{\mathrm{sgn}\,}

\def\cC{\mathcal C}
\def\cD{\mathcal D}

\def\cK{\mathcal K}
\def\cH{\mathcal H}
\def\cL{\mathcal L}
\def\cN{\mathcal N}
\def\cM{\mathcal M}

\def\bR{\mathbb{R}}
\def\R{\mathbb{R}}

\newcommand{\hu}{\hat{u}}

\newcommand{\hC}{\hat{C}}

\newcommand{\dd}{\,d}

\newcommand{\lb}{\left\langle}
\newcommand{\rb}{\right\rangle}
\newcommand{\LC}{\hbox{\Large$\llcorner$}}
\def\mvint_#1{\mathchoice
          {\mathop{\vrule width 6pt height 3 pt depth -2.5pt
                  \kern -9pt \intop}\nolimits_{\kern -3pt #1}}%
          {\mathop{\vrule width 5pt height 3 pt depth -2.6pt
                  \kern -6pt \intop}\nolimits_{#1}}%
          {\mathop{\vrule width 5pt height 3 pt depth -2.6pt
                  \kern -6pt \intop}\nolimits_{#1}}%
          {\mathop{\vrule width 5pt height 3 pt depth -2.6pt
                  \kern -6pt \intop}\nolimits_{#1}}}

\begin{document}

\title{The Free Material Design  problem for stationary heat equation on low dimensional structures}

\author[{1}]{Tomasz Lewi\'nski {\footnote{t.lewinski@il.pw.edu.pl, ORCID: 0000-0003-2299-2162}}}
\author[{2}]{Piotr Rybka{\footnote{rybka@mimuw.edu.pl, ORCID:  0000-0002-0694-8201}}}
\author[{2}]{Anna Zatorska-Goldstein{\footnote{A.Zatorska-Goldstein@mimuw.edu.pl, ORCID: 0000-0002-6633-1238}}}

\affil[{1}] {{\small Faculty of Civil Engineering\\ Warsaw University of Technology\\ al. Armii Ludowej 16, 00-637 Warsaw, PL}}

\affil[{2}]
{{\small Institute of Applied
Mathematics and Mechanics\\ University of Warsaw\\ ul. Banacha 2,
02-097 Warsaw, PL}}

\date{}

\maketitle

\begin{abstract}
For a given balanced distribution of heat sources and sinks, $Q$, we find an optimal conductivity tensor field, $\hat C$, minimizing  the thermal compliance. We present $\hat C$ in a rather explicit form in term of the datum. Our solution is in a cone of non-negative  tensor valued  finite Borel measures. We present a series of examples with explicit solutions.
\end{abstract}

\bigskip\noindent
{\bf Key words:} \quad
Kantorovich norm, tangent space to a measure,  tensor valued finite Borel measures, thermal compliance.

\bigskip\noindent
{\bf 2020 Mathematics Subject Classification.} Primary: 49J20, 
secondary: 49K20, 80M50
\section{Introduction}

\subsection{Background and an informal statement of results}
The paper concerns the problem of optimum design of local, spatially varying, anisotropic thermal properties of structural elements. The aim is to maximize the  overall heat conductivity for given thermal conditions. All the components of the conductivity tensor $C$ are viewed as design variables, while the trace of $C$ is assumed as the unit cost. The optimal distribution of conductivity components is induced by the given distribution of the heat sources within the design domain $\Omega$ and by the given heat flux applied on its boundary. Since the heat sources are prescribed as measures, possibly singular with respect to the Lebesgue measure, the optimal conductivity is expected to be a tensor valued measure.

The objective function is the so-called thermal compliance.  By performing minimization of the objective function thus chosen, we come across the optimum design setting capable of  shaping the best  material  structure by cutting off the redundant part of $\Omega$ as well as delivering the optimal distribution of the conductivity tensor field $C$ in the remaining material part of the same domain. Our main result is Theorem \ref{th:main}, which states that for a given distribution of heat sources $Q$, which is a slightly more general object than a measure, there exists an optimal conductivity tensor $\hat C$, explicitly given in terms of data. At each point of the support of measure $\hat C$  tensor $A = \frac{d \hat C}{d |\hat C|}$ has rank one. Here, $|\hat C|$ denotes the variation of $\hat C$. As a byproduct we obtain an explicit form of the  minimizer of energy $E(\hat C, \cdot)$ associated with $\hat C$, see  \eqref{df-EC}. 

The same objective function, as in the present paper, has been chosen in the books by \cite{Cherkaev2000} and \cite{Allaire2002} where a similar problem has been dealt with, yet concerning the optimal layout of two given (hence homogeneous)  isotropic materials within the design domain, the cost being the volume of one of the materials. In this setting, the correct formulation requires relaxation by homogenization.  The numerical algorithm to find the optimal conductivity can be directly constructed with using the analytical formulae of the relaxation setting (as has already been done in \cite{Goodman}) or by utilizing
some material interpolation schemes, see \cite{Donoso} and \cite{Gersborg}.  

In the present paper the problem of the optimum distribution of a  heat conducting material is considered, the cost being not the volume of a material (this volume is here unknown) but {it} 
is directly expressed by the conductivity tensor, as the integral of its trace. Our problem now is not finding an optimal layout of two distinct materials, but optimal distribution of nonhomogeneous and anisotropic conductivity properties within the design domain, admitting cutting off its part. The latter cutting off property is linked with admitting positive semi-definiteness of the conductivity tensor field to be constructed. The problem thus formulated is the scalar counterpart of the free material design problem (FMD) of creating
elastic structures of minimal compliance, subject to a single load. The FMD method has been proposed by Bendsøe et al, \cite{bendsoe}; its development has been described in \cite{Haslinger}, \cite{czarnecki} and \cite{BL}. The mass optimization by Bouchitté and Buttazzo, \cite{B-B}, has delivered the mathematical tools for the measure-theoretic setting of the FMD, cf. \cite{BL} and in particular \cite{BB}.  
Our advantage over the papers  mentioned above, which mainly deal with the vectorial problems, is relative simplicity. In  this way we may gain deeper insight into the problem.

\subsection{Statement of the problem and the results}
Throughout the paper we assume  that $\Omega\subset \bR^N$ is a bounded, open set  with a Lipschitz continuous boundary. Let us note that we do not assume convexity of $\Omega$, since due to results of \cite{B-Ch-Jimenez} (which will be discussed later) such hypothesis is  not needed.
We first present the classical setting of our problem. Later we will relax it and we will show existence of solutions to the relaxed problem. 

In the classical setting, for a given conductivity tensor $A\in L^\infty(\Omega, Sym^+(\bR^N))$, heat sources $\tilde Q$ and the flux $q$ at the boundary we consider a stationary heat conduction problem,
\begin{align} \label{classic}
 - \di A \nabla u &= \tilde{Q}, \qquad \text{in $\Omega$},\\
( A \nabla u) \cdot  \bar{n} &= q, \qquad \text{on $\d \Omega$}, \nonumber
 \end{align}
 {where $\bar n$ is the outer normal to $\Omega.$}
 
Under natural assumptions on the data, which will be discussed later, the weak form of \eqref{classic} is the Euler-Lagrange equation of the following functional,
\begin{equation*}
E_A(u) = \frac 12\int_\Omega (A\nabla u ,\nabla u ) \, dx - \int_\Omega Qu\, dx \qquad \text{for $u \in \cD(\bR^N)$},
\end{equation*}
where 
$$
\lb Q,u \rb := \int_\Omega \tilde Q(x) u(x)\,dx + \int_{\d\Omega} q(x) \gamma_u(x)\,d\cH^{N-1}(x)
$$
and $\gamma_u$ is the trace of $u$.

Then, we define the thermal compliance
by the following formula,
\begin{equation*}
    J:= \lb Q,\hu \rb \equiv\int_\Omega \tilde Q(x) \hat u(x)\,dx+ \int_{\d\Omega} q(x) \gamma_{\hat u}(x)\,d\cH^{N-1}(x),
\end{equation*}
where $\hat u$ is a solution to 
\begin{equation}\label{df-hau}
\min_u E_A(u) = E_A (\hat u)
\end{equation}
in a suitable function space {containing $\cD(\bR^N)$}.
We immediately notice that the Euler-Lagrange equation for $E_A$, i.e. the weak form of \eqref{classic}, yields,
\begin{equation*} 
J= - 2 E_A(\hat u) = 
\sup_u\left( 2 \int_\Omega Q u \, dx - \int_\Omega A\nabla u \cdot \nabla u  \, dx\right).
\end{equation*}
The advantage of this definition of $J$ is that  it does not require existence of $\hat u$. We may ask about the dependence of thermal compliance on the conductivity tensor $A$, writing $J=J(A)$, and we
can seek to find the optimal $A$ among all nonnegative-definite-matrix-valued
measures bounded by $\int_\Omega \hbox{tr}\, A\,dx \le  \Lambda_0$ (note that $\tr A$
is equivalent to the usual {operator} norm $\| A \|$ when $A$ is a nonnegative-definite
matrix). So, we consider:
\begin{equation*} 
Y = \inf \{ J(A) :\ A \in L^\infty(\Omega, Sym^+(\bR^n)), \ \int_\Omega \hbox{tr}\, A\,dx \le  \Lambda_0 \}.
\end{equation*}

At this point observe that this definition of the cost functional $Y$ should be treated very carefully. If we have a minimizing sequence $\{A_n\}_{n=1}^\infty$, then even if at each $x\in \Omega$ the matrix $A(x)$ is positive definite, then any 
limit may be only semi-definite. In addition, we have no mechanism preventing any concentration phenomena. These problems, and also the desire to reflect better real-life phenomena lead us to recast our problem using the measure theoretic tools. 

\medskip

To this end, instead of a matrix valued function $A$, we consider a matrix-valued, bounded Radon measure $C$ supported in $\overline{\Omega}$. The polar decomposition of $C$ reads  
\begin{equation}\label{df-muA}
    C = A|C|, 
\end{equation}
with $|C|$ being the total variation of the tensor measure $C$, and $A$ being a measurable, matrix-valued function $A$, such that $\| A(x)  \| = 1$ for every $x \in \overline{\Omega}$. 
Moreover, we consider a measure $Q$, with a natural constraint, $\int_{\overline{\Omega}} dQ =0$, which guarantees solvability of the Neumann problem. 
Then, the energy functional $E(C,u)$ replacing $E_A(u)$, takes the form
\begin{equation}\label{df-EC}
E(C,u) = \frac 12 \lb C, \nabla u \otimes \nabla u \rb - \lb Q, u \rb \qquad \forall u \in \cD(\bR^N).
\end{equation}

\medskip

A comment on solvability of \eqref{df-hau} in this setting is in order. In  the case $|C|$ belongs to a class of so-called multi-junction measures, $Q$ is  sufficiently regular and zero boundary data $q$ are imposed, the existence of a solution to the minimization problem \eqref{df-hau} for $E(C,u)$ in place of $E_A(u)$ was solved in \cite{rybka-azg}. Extension of this result for non-zero $q$ seems possible. Observe that for the problem we study, one does not need to know $\hat u$ in advance. However, as a byproduct of our reasoning we obtain also an existence result under much more general assumptions than these of \cite{rybka-azg} -- see Proposition \ref{existence} in Section \ref{sM}.

We may define the relaxed thermal compliance functional as 
\begin{equation}\label{J-rel}
J(C) = \sup_{u\in \cD(\bR^N)} -2 E(C,u).    
\end{equation}
The ultimate definition of the cost functional $Y$ becomes then (for a fixed $\Lambda_0 >0$),
\begin{equation}\label{df-Y}
 Y = \inf_{\hbox{tr}\,C(\overline\Omega)\le  \Lambda_0} J(C).
\end{equation}
Our main result, Theorem \ref{th:main}, consists {of} proving the existence of an optimal $C$ as well as the precise description of the solution. We illustrate the construction process by a series of examples including one based on Brothers' benchmark (see Example \ref{brothers}) .

\medskip

Let us comment briefly on the proof of Theorem \ref{th:main}. We use the minimax procedure (see Proposition \ref{min-max})) to show that
\[
Y = \sup_{u \in \cD} \, \inf_{\tr C(\overline{\Omega}) \le \Lambda_0} \left( 2 \lb Q,u \rb - \lb C, \nabla u \otimes \nabla u \rb \right). 
\]
In principle, one always has $\sup_u \inf_C E(C,u) \le \inf_C \sup_u E(C,u)$ by
an elementary argument; the point is that the equality holds. The dual problem
is much easier to work with. 

In our context, we consider $Q$ to lie in  a suitable subspace of $(\Lip_0(\overline{\Omega})/\bR)^*$ -- the dual of Lipschitz
functions modulo constants. Due to results of \cite{B-Ch-Jimenez} one can represent $Q$ equivalently as $- \di p$, where $p$ is a vector-valued measure with a certain regularity
property: $p$ is decomposed as $p = \sigma |p|$, where $\sigma(x)$ lies in a so called tangent space to the measure $|p|$ for $|p|$-a.e. $x$. The so called Kantorovich norm of $Q$ is defined as
$$
\| Q\|_1 := \sup\{ \lb Q, v \rb: v \in \Lip_1(\overline{\Omega})\};
$$

Combining the arguments of \cite{B-Ch-Jimenez} with the minimax procedure allows us to construct  an optimal Lipschitz function $\hu$  for which $\|Q\|_1$ is achieved and an optimal $\check p$ for which $\|Q\|_1=|\check p|(\overline{\Omega})$. Then we proceed to show that an optimal  $\hat C$ can be constructed precisely: its support is contained in the set $\{x: |\nabla \hu(x) | = \| \nabla \hu \|_\infty \}$ and it takes values proportional to $\nabla \hu \otimes \nabla \hu$. {Interestingly, $\check p$ equals to the heat flux corresponding to the optimal $\hat C$.}
For the precise statements we refer to the Section \ref{sM}. 

Once we established existence of an optimal tensor-field $\hC$ we address the question of solvability of (\ref{df-hau}) for this specific choice of the conductivity tensor. This is related to our previous work \cite{rybka-azg}. The result is presented in Proposition \ref{existence}.

The  whole Section \ref{examples} is devoted to presenting examples.



\subsubsection*{Notation}

We use the standard notation $Sym^+(\bR^N)$ for the space of symmetric  $N\times N$, nonnegative defined matrices. Moreover, the following function spaces will be of constant use in this note:
\begin{itemize}
\item  spaces of Lipschitz functions 
$\Lip_1(\overline{ \Omega})=\{ f:\overline{\Omega}\to\bR: \ |f(x)-f(y)|\le |x-y|,\ \forall x,y\in \overline{\Omega}\}$, and $\Lip_0(\overline{\Omega}) = \Lip (\overline{\Omega}) / \bR$;

\item $\cM(\overline{\Omega},\bR^d)$
(resp. $\cM^{+}(\overline{\Omega},\bR^d)$) - the space of $\bR^d$-valued (resp. positive) Borel measures compactly supported in $\overline{ \Omega}$. We shall abbreviate the notation to $\cM(\overline{\Omega})$ when $d=1$;

\item $\cM_b(\overline{\Omega},\bR^d)$ - the space of $\bR^d$-valued bounded Radon measures compactly supported in $\overline{ \Omega}$;

\item $\cM_b(\overline{\Omega}; Sym^+(\bR^N))$ - the cone  of nonnegative defined, symmetric  matrix-valued, bounded Radon measures supported in $\overline{\Omega}$;

\item $\cM_{\Lambda_0}(\overline{\Omega}) = \{C\in \cM_b(\overline{\Omega}; Sym^+(\bR^N)), \tr C(\overline{\Omega}) \le \Lambda_0\}$;

\item $\cM_0(\overline{ \Omega})$ - the space of signed Radon measures $\mu$ supported in $\overline{\Omega}$ such that $\int d\nu = 0$. It is endowed with the Kantorovich norm
$$
\| \nu\|_1 := \sup\{ \lb \nu, u \rb: u \in \Lip_1(\overline{\Omega})\};
$$

\item $\cM_{0,1}(\overline{ \Omega})$ - the completion of $\cM_0(\overline{\Omega})$ in $(\Lip_0(\overline{ \Omega})^*, \|\cdot \|_1)$.
\end{itemize}
 
Throughout the paper we shall also use the following notation: if $Q\in \cM(\overline{\Omega})$ and $u \in \cD(\overline{\Omega})$ or ${u\in} \Lip_1(\overline{\Omega})$ then $\lb Q, u \rb$ denotes the standard action of  $Q$ on $u$. If $Q\in \cM_{0,1}(\overline{\Omega})$ and $p\in \cM(\overline{\Omega}, \bR^N)$ is such that $\di p + Q = 0$, then for $u\in  \Lip_1(\overline{\Omega})$ the notation  $\lb p, \nabla u \rb$ shall be understood in the sense described by Proposition \ref{jimenez2}, i.e., the formula \eqref{jimenez2f}. The dot $\cdot$ denotes the scalar product of vectors in $\bR^N$.

\section{Auxiliary results} \label{aux}
We draw upon two results, which are crucial for this paper, hence we state them fully. The first one is a general version of a minimax theorem. The other one is the characterization of $\cM_{0,1}(\overline{\Omega})$.

\subsection{Minimax theorem}

\begin{proposition}[cf \cite{sorin}] \label{min-max}
Let $X$ be a compact convex subset of a topological vector space, let $Y$ be a convex set, and let $\mathcal{L}$ be a real function on the product set $X \times Y$. Assume that
\begin{enumerate}
\item[(i)] $\forall \alpha \in \R$ $\forall u_0 \in Y$ the set 
$ \{ \mu \in X \colon  \mathcal{L}(\mu, u_0) \geq \alpha\}$ is closed and convex;
\item[(ii)] $\forall \mu_0 \in X$, $\mathcal{L}(\mu_0, \cdot)$ is convex on $Y$;
\end{enumerate}
Then,
$$
\sup_{\mu \in X} \inf_{u \in Y} \mathcal{L}(\mu,u) = \inf_{u \in Y} \sup_{\mu \in X}  \mathcal{L}(\mu,u)
$$
\end{proposition}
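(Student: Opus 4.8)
The plan is to prove the two inequalities separately. The inequality $\sup_{\mu\in X}\inf_{u\in Y}\mathcal{L}(\mu,u)\le \inf_{u\in Y}\sup_{\mu\in X}\mathcal{L}(\mu,u)$ requires no hypotheses at all: for any fixed pair $(\mu_0,u_0)$ one has $\inf_{u}\mathcal{L}(\mu_0,u)\le\mathcal{L}(\mu_0,u_0)\le\sup_{\mu}\mathcal{L}(\mu,u_0)$, and taking the supremum over $\mu_0$ and then the infimum over $u_0$ yields it. The whole content is therefore the reverse inequality. Abbreviate $w:=\sup_{\mu}\inf_{u}\mathcal{L}(\mu,u)$ and suppose, for contradiction, that $\inf_{u}\sup_{\mu}\mathcal{L}(\mu,u)>w$; fix a real number $c$ with $w<c<\inf_{u}\sup_{\mu}\mathcal{L}(\mu,u)$.

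First I would use compactness to pass to finitely many functions. For $u\in Y$ set $S_u:=\{\mu\in X:\mathcal{L}(\mu,u)\ge c\}$; by hypothesis~(i) each $S_u$ is a closed, convex subset of the compact set $X$, hence compact. Since $w<c$, for every $\mu\in X$ we have $\inf_u\mathcal{L}(\mu,u)\le w<c$, so some $u$ satisfies $\mu\notin S_u$; thus $\bigcap_{u\in Y}S_u=\emptyset$. By the finite intersection property of the compact space $X$ there are $u_1,\dots,u_n\in Y$ with $\bigcap_{i=1}^n S_{u_i}=\emptyset$, which is exactly $\sup_{\mu\in X}\min_{1\le i\le n}\mathcal{L}(\mu,u_i)\le c$.

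The decisive step is to collapse these $n$ points into a single one. I would prove, by induction on $n$, the lemma: for any $u_1,\dots,u_n\in Y$ there is a convex combination $u_0\in\mathrm{conv}\{u_1,\dots,u_n\}\subseteq Y$ (here convexity of $Y$ is used) with $\sup_\mu\mathcal{L}(\mu,u_0)\le\sup_\mu\min_{1\le i\le n}\mathcal{L}(\mu,u_i)$. Granting it, the $u_0$ produced from the $u_1,\dots,u_n$ above satisfies $\inf_u\sup_\mu\mathcal{L}(\mu,u)\le\sup_\mu\mathcal{L}(\mu,u_0)\le c$, contradicting $c<\inf_u\sup_\mu\mathcal{L}(\mu,u)$ and finishing the proof. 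The inductive step reduces to the two-point case $u',u''\in Y$, which I would treat by studying, along the segment $t\mapsto u_t:=(1-t)u'+tu''\in Y$, the nested closed convex superlevel sets $\{\mu:\mathcal{L}(\mu,u_t)\ge r\}$. Convexity of $\mathcal{L}(\mu,\cdot)$ from~(ii) yields $\mathcal{L}(\mu,u_t)\le\max(\mathcal{L}(\mu,u'),\mathcal{L}(\mu,u''))$, tying these sets to those of $u'$ and $u''$; closedness and convexity in $\mu$ from~(i) on the compact $X$ then let one run a connectedness/separation argument in $t$ to single out one parameter for which the superlevel set at the relevant height is empty.

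I expect this selection of a single $t$ in the two-point case to be the main obstacle: it is the only place where the two hypotheses genuinely interact, and the bookkeeping of strict versus non-strict superlevel sets — together with the attainment of suprema over $\mu$, which is guaranteed by upper semicontinuity on the compact set $X$ — must be handled with care. It is worth stressing what the argument does \emph{not} need: no topology on $Y$ and no semicontinuity of $\mathcal{L}$ in $u$ are used, since all compactness is spent on the $X$-side through~(i), whereas~(ii) enters only algebraically through convex combinations. This is precisely why the asymmetric hypotheses (i)--(ii) are enough.
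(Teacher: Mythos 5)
The paper offers no proof of this proposition at all: it is imported from Sorin's book with a ``cf.'', so there is no in-paper argument to measure yours against. Your outline reproduces the standard elementary route to this one-sided minimax theorem (essentially Komiya's proof of Sion's theorem, adapted to hypotheses (i)--(ii)): the free inequality $\sup\inf\le\inf\sup$, the reduction to finitely many $u_1,\dots,u_n$ via the finite intersection property of the compact convex sets $S_{u_i}$, and the interpolation lemma collapsing the $u_i$ to one convex combination. That skeleton is correct, and your structural remarks (all compactness is spent on the $X$-side, convexity in $u$ is used only algebraically) are accurate. But the step you explicitly defer --- singling out one $t$ in the two-point case --- is where the entire content of the theorem lives, and your sketch stops exactly there; as written it is a plan, not a proof. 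Two smaller points: the lemma as you state it (with no loss, $\sup_\mu\mathcal{L}(\mu,u_0)\le\sup_\mu\min_i\mathcal{L}(\mu,u_i)$) is stronger than what the argument yields; one only gets $\le c'$ for every $c'$ strictly above that value, which is harmless because of the slack $c<\inf_u\sup_\mu\mathcal{L}$. And ``$\bigcap_i S_{u_i}=\emptyset$ is exactly $\sup_\mu\min_i\mathcal{L}(\mu,u_i)\le c$'' should be ``implies'', not ``is exactly''.

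For completeness, here is how the two-point step actually closes, and why your claim that no semicontinuity in $u$ is needed is right. Suppose $\sup_\mu\min\{\mathcal{L}(\mu,u'),\mathcal{L}(\mu,u'')\}\le c<c''<c'$ and, for contradiction, that $\sup_\mu\mathcal{L}(\mu,u_t)>c'$ for every $t\in[0,1]$, where $u_t=(1-t)u'+tu''$. By (i) each $\mathcal{L}(\cdot,u_t)$ is upper semicontinuous on the compact $X$, so its supremum is attained and finite; hence $B_t=\{\mu:\mathcal{L}(\mu,u_t)\ge c''\}$ is nonempty, closed and convex, and $M:=\max_\mu\mathcal{L}(\mu,u'')<\infty$. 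Convexity in $u$ gives $\mathcal{L}(\mu,u_t)\le\max\{\mathcal{L}(\mu,u'),\mathcal{L}(\mu,u'')\}$, so $B_t\subseteq B_0\cup B_1$ with $B_0\cap B_1=\emptyset$; since $B_t$ is connected, $[0,1]$ splits into two complementary subintervals $I\ni 0$ and $J\ni 1$ (each convex by the same inclusion applied to subsegments) meeting at some $s$. Say $s\in I$, take $t_n\downarrow s$ in $J$ and $\mu_n$ with $\mathcal{L}(\mu_n,u_{t_n})\ge c'$, so $\mu_n\in B_{t_n}\subseteq B_1$. Writing $u_{t_n}=(1-\theta_n)u_s+\theta_n u''$ with $\theta_n\to0$, convexity gives $c'\le(1-\theta_n)\mathcal{L}(\mu_n,u_s)+\theta_n M$, hence $\mathcal{L}(\mu_n,u_s)\ge(c'-\theta_n M)/(1-\theta_n)\to c'>c''$, so eventually $\mu_n\in B_s\subseteq B_0$, contradicting $B_0\cap B_1=\emptyset$. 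This quantitative use of genuine convexity, together with the finite bound $M$ supplied by upper semicontinuity on the compact $X$, is precisely what replaces the lower semicontinuity in $u$ that Sion's theorem would otherwise demand. With this step supplied (and the routine induction from two points to $n$, restricting $X$ to $\{\mu:\mathcal{L}(\mu,u_n)\ge c_1\}$), your argument is complete.
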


\subsection{The characterization of $\cM_{0,1}(\overline{\Omega})$}

Here we briefly present results of \cite{B-Ch-Jimenez} which provide key tools {for} our reasoning. We start with a notion of \textit{tangent space to a measure} and \textit{tangent (vector) measure}. The former definition has been discussed in many contexts, see, e.g.,   \cite{energies,BF-JFA,B-Ch-Jimenez}. 

Let $K \subset \R^N$ be a compact set. {In this section we use the notation $C^\infty(K)$. Since we assume the boundary of $\Omega$ to be Lipschitz, it should be understood as the space of smooth functions restricted to the set $K$ (see \cite{flw} for detailed discussion of all possible definitions).} 

Let  $\mu$ be a nonnegative Borel measure compactly supported in $K$. We set
\begin{multline*}
\cN :=\Big\{ \xi \in L^\infty_\mu(K,\bR^N)\colon \exists \{u_n\}_{n=1}^\infty, u_n \in C^\infty(K),\ 
u_n \rightrightarrows0,  \ 
\nabla u_n  \to \xi\, \text{in}\, \sigma(L^\infty_\mu, L^1_\mu) \Big\}.
\end{multline*} 
Here, by writing $v_n \to \xi$, in $ \sigma(L^\infty_\mu, L^1_\mu)$ we mean convergence of $v_n$ in the weak${}^*$ topology. The orthogonal complement of $\cN$ in $L^1_\mu(K;\R^N)$, defined as
$$
\cN^\perp:= \{\eta\in L^1_\mu(K; \bR^N)\colon \int_K \eta \cdot \xi \,d\mu 
= 0 \quad \text{for all}\ \xi \in \cN\},
$$
is a closed subspace of $L^1_\mu(K;\R^N)$. The tangent space $T_\mu$ to the measure $\mu$ is then defined by the local characterization provided below.

\begin{proposition}[see \cite{B-Ch-Jimenez} Prop. 3.2]
There holds: 
\begin{enumerate}

\item There exists a $\mu$-measurable multifunction $T_\mu$ from $K$ to the subspaces of $\R^N$ such that
$$
\xi \in \cN^\perp \iff \xi(x) \in T_\mu(x) \quad \mu-a.e. x \in \R^N.
$$
\item The linear operator $u \in C^1(K) \mapsto P_\mu(x) \nabla u(x) \in L^\infty_\mu(K;\R^N)$, where $P_\mu(x)$ denotes the orthogonal projection on $T_\mu(x)$ can be extended in a unique way as a linear, continuous operator
$$
\nabla_\mu : \Lip(K) \to  L^\infty_\mu(K;\R^N),
$$
where $\Lip(K)$ is equipped with the uniform convergence on bounded subsets of $\Lip(K)$, and $L^\infty_\mu(K;\R^N)$ with the weak-$\ast$ topology.
\end{enumerate}
\end{proposition}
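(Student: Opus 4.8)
The plan is to establish the two assertions separately: I would derive the pointwise characterization in (1) from a decomposability (module) property of $\cN$, and then obtain the extension in (2) by combining density of smooth functions in $\Lip(K)$ with a weak-$\ast$ continuity estimate for multiplication by the projection field $P_\mu$.

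For part (1) the crucial observation is that $\cN$ is a module over $L^\infty_\mu(K)$. Indeed, if $\xi\in\cN$ is realized by $u_n\in C^\infty(K)$ with $u_n\rightrightarrows 0$ and $\nabla u_n\to\xi$ in $\sigma(L^\infty_\mu,L^1_\mu)$, then for $g\in C^\infty(K)$ the Leibniz rule gives $\nabla(g u_n)=g\nabla u_n+u_n\nabla g$. The first term converges weak-$\ast$ to $g\xi$ (testing against $\psi\in L^1_\mu$ amounts to testing $\nabla u_n$ against $g\psi\in L^1_\mu$), while the second tends to $0$ uniformly because $u_n\rightrightarrows 0$ and $\nabla g$ is bounded; since also $g u_n\rightrightarrows 0$, we get $g\xi\in\cN$. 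Approximating a general $g\in L^\infty_\mu(K)$ by smooth functions and using that $\cN$ is weak-$\ast$ closed upgrades this to arbitrary bounded multipliers. Consequently the annihilator $\cN^\perp\subset L^1_\mu(K;\bR^N)$ is itself a closed $L^\infty_\mu$-module, since for $\eta\in\cN^\perp$ and $g\in L^\infty_\mu$ one has $\int(g\eta)\cdot\xi\,d\mu=\int\eta\cdot(g\xi)\,d\mu=0$ for every $\xi\in\cN$. A closed submodule of $L^1_\mu(K;\bR^N)$ is exactly the set of sections of a $\mu$-measurable field of subspaces: by the standard representation theorem for decomposable subspaces (measurable selection, in the spirit of Castaing--Valadier and Hiai--Umegaki, cf. \cite{B-Ch-Jimenez}), there is a $\mu$-measurable multifunction $x\mapsto T_\mu(x)$ into subspaces of $\bR^N$ with $\eta\in\cN^\perp\iff\eta(x)\in T_\mu(x)$ $\mu$-a.e., which is exactly (1).

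For part (2) the operator is first seen to be bounded on smooth functions, $\|P_\mu\nabla u\|_{L^\infty_\mu}\le\|\nabla u\|_\infty\le\Lip(u)$, since $P_\mu(x)$ is an orthogonal projection. As $C^\infty(K)$ is dense in $\Lip(K)$ for uniform convergence with uniformly bounded Lipschitz constants (extend a Lipschitz function to $\bR^N$, mollify without increasing the constant, and restrict), I would define $\nabla_\mu u$ as a weak-$\ast$ cluster point of $P_\mu\nabla u_n$ for any such approximating sequence $u_n\to u$; boundedness guarantees such cluster points exist. Well-definedness is the heart of the matter: if $u_n\rightrightarrows 0$ with $\Lip(u_n)$ bounded, then along any subsequence $\nabla u_n\to\xi$ weak-$\ast$ with $\xi\in\cN$ by definition; since $\cN^\perp$ is a module, $\int_E\xi\cdot\eta\,d\mu=0$ for every measurable $E$ and tangent section $\eta$, whence $\xi(x)\in T_\mu(x)^\perp$ a.e.\ and therefore $P_\mu\xi=0$. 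Moreover $P_\mu$ is weak-$\ast$ continuous as multiplication by a fixed bounded symmetric field: for $\psi\in L^1_\mu$, $\int P_\mu\nabla u_n\cdot\psi\,d\mu=\int\nabla u_n\cdot P_\mu\psi\,d\mu\to\int\xi\cdot P_\mu\psi\,d\mu=\int P_\mu\xi\cdot\psi\,d\mu=0$. Since every subsequence of $P_\mu\nabla u_n$ has a further subsequence tending weak-$\ast$ to $0$, the whole sequence does, so the limit is independent of the approximation; by linearity this yields a well-defined linear extension, and the same computation shows continuity from $(\Lip(K),\ \text{uniform on bounded sets})$ into $(L^\infty_\mu,\ \text{weak-}\ast)$. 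Uniqueness is forced by density of $C^1(K)$.

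The main obstacle is the representation step in (1): turning the algebraic module property of $\cN^\perp$ into a genuinely pointwise, $\mu$-measurable field $T_\mu(x)$ of subspaces, well defined up to $\mu$-null sets and with linear fibres. This rests on the measurable-selection machinery for decomposable subspaces of $L^1_\mu$. The enabling step is the module property itself, secured by the Leibniz-rule computation above; once the field $T_\mu$ and the identification of $\cN$ (up to weak-$\ast$ closure) with the normal fields $\{\zeta:\zeta(x)\in T_\mu(x)^\perp\}$ are in hand, part (2) follows from soft weak-$\ast$ arguments.
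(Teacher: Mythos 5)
This proposition is not proved in the paper at all: it is quoted verbatim from \cite{B-Ch-Jimenez} (Prop.~3.2), which in turn rests on the framework of \cite{energies}, so there is no in-paper argument to compare against. Your reconstruction follows what is essentially the standard proof from that literature, and its skeleton is sound: the Leibniz-rule computation showing $g\xi\in\cN$ for smooth $g$, the deduction that $\cN^\perp$ is a closed $L^\infty_\mu$-submodule of $L^1_\mu(K;\bR^N)$, the representation of such a submodule by a $\mu$-measurable field of subspaces $T_\mu(x)$, and the extension of $u\mapsto P_\mu\nabla u$ by density plus the observation that any weak-$\ast$ cluster point of $\nabla u_n$ for $u_n\rightrightarrows 0$ lies in $\cN$ and hence is annihilated by $P_\mu$. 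Two points deserve flagging. First, the assertion that $\cN$ is weak-$\ast$ closed is itself a nontrivial claim (via Krein--\v{S}mulian and a diagonal argument over sequences with $\mu$-bounded gradients) and you do not justify it; fortunately you do not need it, since to get $g\eta\in\cN^\perp$ for $g\in L^\infty_\mu$ it suffices to approximate $g$ boundedly $\mu$-a.e.\ by smooth $g_k$ and pass to the limit in $\int\eta\cdot(g_k\xi)\,d\mu=0$ by dominated convergence. Second, the representation theorem for closed decomposable subspaces of $L^1_\mu(K;\bR^N)$, which you correctly identify as the heart of part~(1), is invoked rather than proved; since the fibres here are subspaces of the finite-dimensional $\bR^N$, one can make this elementary by setting $T_\mu(x)=\mathrm{span}\{\eta_j(x)\}$ for a countable dense family $\{\eta_j\}\subset\cN^\perp$ and using the module property plus closedness for the reverse inclusion, and it would strengthen the write-up to say so. With those caveats, the argument is a faithful and correct account of the proof the cited reference supplies.
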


We can introduce the space of tangent vector  measures:
\begin{equation}\label{defMT}
\cM_T(\overline{\Omega}, \R^N) := \{ \lambda = \sigma \mu \colon \mu \in \cM^{+}(\overline{\Omega}), \quad \sigma(x) \in T_\mu(x), \,\, \mu - a.e. \}.
\end{equation}

An important ingredient of our reasoning is the following characterization of a space $\cM_{0,1}(\overline{\Omega})$.
\begin{proposition}[see \cite{B-Ch-Jimenez} Thm. 3.5 and 3.6] \label{jimenez2}
Let $\lambda \in \cM(\overline{\Omega}, \R^N)$, then $-\di \lambda \in \cM_{0,1}(\overline{\Omega})$ iff $\lambda \in \cM_T(\overline{\Omega}, \R^N)$. In this case, writing $\lambda = \sigma \mu$ as in \eqref{defMT}, we have for every $u \in \Lip(\overline{\Omega})$:
\begin{equation} \label{jimenez2f}
\lb -\di \lambda, u \rb = \int_{\overline{\Omega}}\sigma \cdot \nabla_\mu u \, d\mu =: \lb \lambda, \nabla u \rb.
\end{equation}
Moreover, the following equality holds between subsets of $\cD'(\overline{\Omega})$:
$$
\{ T_\nu \colon \nu \in \cM_{0,1}(\overline{\Omega}) \} = \{ - \di \lambda \colon \lambda \in \cM_T(\overline{\Omega}, \R^N) \}.
$$
Furthermore, for any $f \in \cM_{0,1}(\overline{\Omega})$, there exists $\overline{\lambda} \in  \cM_T(\R^N, \R^N) $ such that
$$
\| f \|_1 = |\overline{\lambda}|(\overline{\Omega}) = \min_{\lambda \in \cM_T(\overline{\Omega}, \R^N)} \left\{ \int  d |\lambda| \colon -\di \lambda = f \right\}.
$$
\end{proposition}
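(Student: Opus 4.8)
The plan is to read the proposition as bundling three assertions---the integration-by-parts identity \eqref{jimenez2f}, the equivalence $-\di\lambda\in\cM_{0,1}(\overline{\Omega})\iff\lambda\in\cM_T(\overline{\Omega},\R^N)$, and the Beckmann-type formula $\|f\|_1=\min\{|\lambda|(\overline{\Omega}):-\di\lambda=f\}$---and to organize the whole argument around the minimal-flux problem $\min\{|\lambda|(\overline{\Omega}):-\di\lambda=f\}$ together with Kantorovich--Rubinstein duality. The guiding principle is that an optimal flux must align with an optimal $1$-Lipschitz potential, and that alignment is precisely what tangency encodes.

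First I would establish \eqref{jimenez2f} and the easy implication $\lambda\in\cM_T\Rightarrow-\di\lambda\in\cM_{0,1}$. Write $\lambda=\sigma\mu$ with $\mu=|\lambda|$ and $\sigma(x)\in T_\mu(x)$. For $u\in C^1(\overline{\Omega})$ one has $\int\nabla u\cdot d\lambda=\int(\nabla u\cdot\sigma)\,d\mu=\int(P_\mu\nabla u\cdot\sigma)\,d\mu=\int\nabla_\mu u\cdot\sigma\,d\mu$, the middle equality using that $P_\mu$ is the self-adjoint projection onto $T_\mu$ and $\sigma\in T_\mu$. By part (2) of the preceding proposition on $\nabla_\mu$---continuity of $\nabla_\mu:\Lip(\overline{\Omega})\to L^\infty_\mu$---this extends to all Lipschitz $u$, which is \eqref{jimenez2f}. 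Since $|\nabla_\mu u(x)|=|P_\mu\nabla u(x)|\le\mathrm{Lip}(u)$, it follows that $|\lb-\di\lambda,u\rb|\le\mathrm{Lip}(u)\,|\lambda|(\overline{\Omega})$, so $-\di\lambda$ is bounded on $\Lip_0$ with $\|-\di\lambda\|_1\le|\lambda|(\overline{\Omega})$; a mollification/atomization argument then exhibits $-\di\lambda$ as a Kantorovich-norm limit of elements of $\cM_0$, placing it in $\cM_{0,1}$.

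Next I would treat the converse and the mass identity simultaneously by the direct method. Weak duality is immediate: for admissible $\lambda$ and any $u\in\Lip_1(\overline{\Omega})$, $\lb f,u\rb=\int\nabla u\cdot d\lambda\le|\lambda|(\overline{\Omega})$, hence $\|f\|_1\le\inf\{|\lambda|(\overline{\Omega}):-\di\lambda=f\}$. For $f\in\cM_0$ the classical Kantorovich--Rubinstein theorem (Beckmann's problem with the Euclidean cost) furnishes a flux with $|\lambda|(\overline{\Omega})=\|f\|_1$. For general $f\in\cM_{0,1}=\overline{\cM_0}$ I would pick $\nu_k\to f$ in $\|\cdot\|_1$ with $\nu_k\in\cM_0$, take optimal fluxes $\lambda_k$ satisfying $|\lambda_k|(\overline{\Omega})=\|\nu_k\|_1$, and extract a weak-$*$ limit $\overline{\lambda}$; weak-$*$ continuity of $\di$ gives $-\di\overline{\lambda}=f$, while lower semicontinuity of total variation gives $|\overline{\lambda}|(\overline{\Omega})\le\liminf\|\nu_k\|_1=\|f\|_1$. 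Combined with weak duality, this yields both existence of a minimizer and the identity $\|f\|_1=|\overline{\lambda}|(\overline{\Omega})=\min\{|\lambda|(\overline{\Omega}):-\di\lambda=f\}$.

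Finally, tangency---hence the remaining implication and the set equality---would follow from one continuity lemma: every $f\in\cM_{0,1}$ is continuous along sequences $u_n\to0$ uniformly with $\sup_n\mathrm{Lip}(u_n)<\infty$ (split $\lb f,u_n\rb$ through a close $\nu_k\in\cM_0$ and bound by $\|f-\nu_k\|_1\,\mathrm{Lip}(u_n)+\|u_n\|_\infty\,|\nu_k|(\overline{\Omega})$). Applying this to functions realizing an arbitrary $\xi\in\cN$, i.e. $u_n\rightrightarrows0$ with $\nabla u_n\to\xi$ weak-$*$, for any $\lambda=\sigma\mu$ with $-\di\lambda=f$ gives $0=\lim\lb f,u_n\rb=\int\xi\cdot\sigma\,d\mu$; as $\xi\in\cN$ is arbitrary, $\sigma(x)\in\cN^\perp=T_\mu(x)$ $\mu$-a.e., so $\lambda\in\cM_T$. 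This proves $-\di\lambda\in\cM_{0,1}\Rightarrow\lambda\in\cM_T$ and, applied to $\overline{\lambda}$, that the optimal flux is tangent; the equality of subsets of $\cD'(\overline{\Omega})$ is then read off from the two implications and the representation. I expect the genuine obstacle to be the no-duality-gap/attainment step: justifying the Euclidean Kantorovich--Rubinstein identity on a possibly nonconvex $\overline{\Omega}$---where optimal fluxes may charge the convex hull, explaining the appearance of $\cM_T(\R^N,\R^N)$---through a Fenchel--Rockafellar argument whose constraint qualification must be verified, and controlling the approximation $\nu_k\to f$ so that the fluxes $\lambda_k$ remain equibounded.
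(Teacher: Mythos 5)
This proposition is not proved in the paper at all: it is imported verbatim from Bouchitt\'e--Champion--Jimenez \cite{B-Ch-Jimenez} (their Theorems 3.5 and 3.6) and used as a black box, so there is no in-paper argument to compare yours against. That said, your reconstruction follows the same general lines as the cited source: integration by parts through $\nabla_\mu$ for the easy implication, Kantorovich--Rubinstein/Beckmann duality plus the direct method for the mass identity and attainment, and orthogonality to $\cN$ for tangency. The overall architecture is sound.

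Two steps should be flagged as genuine gaps rather than routine omissions. First, in the implication $\lambda\in\cM_T(\overline{\Omega},\R^N)\Rightarrow-\di\lambda\in\cM_{0,1}(\overline{\Omega})$ you only verify that $-\di\lambda$ is a bounded functional on $\Lip_0(\overline{\Omega})$; membership in the completion $\cM_{0,1}(\overline{\Omega})$ is strictly stronger, and your ``mollification/atomization argument'' needs $\|\di\lambda_\epsilon-\di\lambda\|_1\to0$, i.e.\ an estimate uniform over $\Lip_1(\overline{\Omega})$, which is not automatic and is precisely where the tangency hypothesis does real work. Second, in the tangency step you apply your continuity lemma to the witnesses $u_n$ of an arbitrary $\xi\in\cN$, but the definition of $\cN$ used here only bounds $\nabla u_n$ in $L^\infty_\mu$ (via weak-$\ast$ convergence), not $\Lip(u_n)$ on all of $\overline{\Omega}$; without a truncation or diagonal argument the hypothesis $\sup_n\Lip(u_n)<\infty$ of your lemma is not available, so the identity $\lim_n\lb f,u_n\rb=0$ is not yet justified. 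Your own caveat about nonconvexity is well placed: the norm $\|\cdot\|_1$ is taken over Euclidean $1$-Lipschitz functions, the optimizer is asserted to lie in $\cM_T(\R^N,\R^N)$ rather than $\cM_T(\overline{\Omega},\R^N)$, and the paper's own examples switch to the geodesic distance on a nonconvex $\Omega$ --- so the attainment statement does require exactly the care you anticipate at the end.
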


\begin{remark}
We stress that there are many ways to represent a measure as $\di \sigma$, due to a non-trivial kernel of the operator $\di$.
\end{remark}

\section{Main results}\label{sM}

Let $Q \in \cM_{0,1}(\overline{ \Omega}) $ be fixed. We will denote  
\begin{equation*}
\Sigma_Q = \{ p\in \cM_T(\overline{\Omega};\R^N) \colon \di p + Q = 0\}.
\end{equation*}

The thermal compliance functional $J$ is defined by \eqref{J-rel}. Its optimal value $Y$ (for a fixed $\Lambda_0 >0$) is defined by \eqref{df-Y}.
The two Propositions below, \ref{Y-Q} and \ref{Y-p} give characterization of $Y$
They provide details which we use to construct the optimal tensor $C$ -- see Theorem \ref{th:main}.

\begin{proposition} \label{Y-Q}Assume $Q \in \cM_{0,1}(\overline{\Omega})$, $\Lambda_0 >0$ is fixed, and $Y$ is the optimal value defined by \eqref{df-Y}. We have 
\begin{equation} \label{Z2}
\sqrt{Y} = \frac{1}{\sqrt{\Lambda_0}} \sup_{u\in \cD(\bR^N)\cap \Lip_1(\overline{\Omega})} \lb Q,u \rb = \frac{1}{\sqrt{\Lambda_0}} \|Q\|_1.
\end{equation}
Moreover there exists a maximizer $\hu \in Lip_1(\overline{\Omega})$.
\end{proposition}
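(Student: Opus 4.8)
The plan is to dualize \eqref{df-Y} by the minimax theorem, to solve the resulting inner problem explicitly by pointwise linear algebra and a scaling, and finally to identify the value with the Kantorovich norm using the tangential representation of $Q$ from Proposition \ref{jimenez2}.

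\emph{(Minimax step.)} Writing $\mathcal{L}(C,u) = 2\lb Q,u\rb - \lb C, \nabla u \otimes \nabla u\rb$, we have $J(C) = \sup_{u\in\cD(\bR^N)}\mathcal{L}(C,u)$ and $Y = \inf_{C\in\cM_{\Lambda_0}(\overline{\Omega})}\sup_u \mathcal{L}(C,u)$. I would apply Proposition \ref{min-max} to $g := -\mathcal{L}$, taking the compact convex set to be $\cM_{\Lambda_0}(\overline{\Omega})$ and the convex set to be $\cD(\bR^N)$. For compactness note that $|C|(\overline{\Omega})\le \tr C(\overline{\Omega})\le\Lambda_0$, since $\tr A\ge\|A\|=1$; thus $\cM_{\Lambda_0}(\overline{\Omega})$ is bounded in total variation and weak-$\ast$ closed, hence weak-$\ast$ compact. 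Hypothesis (i) holds because $C\mapsto g(C,u_0)$ is affine and weak-$\ast$ continuous (it is the pairing with the continuous field $\nabla u_0\otimes\nabla u_0$), so its super-level sets are convex and closed; hypothesis (ii) holds because $g(C,\cdot) = \int_{\overline{\Omega}}\nabla u\cdot A\nabla u\,d|C| - 2\lb Q,u\rb$ is convex in $u$, the quadratic form being nonnegative as $A\ge 0$. This yields $\sup_C\inf_u g = \inf_u\sup_C g$, that is
\[
Y = \sup_{u\in\cD(\bR^N)}\Big( 2\lb Q,u\rb - \sup_{C\in\cM_{\Lambda_0}(\overline{\Omega})}\lb C,\nabla u\otimes\nabla u\rb\Big).
\]

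\emph{(Inner problem and scaling.)} For fixed $u$, writing $C=A|C|$ with $\|A\|=1$ gives $\lb C,\nabla u\otimes\nabla u\rb = \int_{\overline{\Omega}}\nabla u\cdot A\nabla u\,d|C|\le \sup_{\overline{\Omega}}|\nabla u|^2\,|C|(\overline{\Omega})\le\Lambda_0\sup_{\overline{\Omega}}|\nabla u|^2$, with equality for $C=\Lambda_0\,(e\otimes e)\,\delta_{x_0}$, where $x_0$ maximizes $|\nabla u|$ on $\overline{\Omega}$ and $e=\nabla u(x_0)/|\nabla u(x_0)|$. Hence $Y=\sup_u\big(2\lb Q,u\rb-\Lambda_0\sup_{\overline{\Omega}}|\nabla u|^2\big)$. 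Optimizing over the rescaling $u\mapsto su$ (the map $s\mapsto 2s\lb Q,u\rb-s^2\Lambda_0\sup_{\overline{\Omega}}|\nabla u|^2$ is maximized at $s=\lb Q,u\rb/(\Lambda_0\sup_{\overline{\Omega}}|\nabla u|^2)$, with constant functions contributing $0$) gives
\[
\Lambda_0 Y=\Big(\sup_{u\in\cD(\bR^N)}\frac{\lb Q,u\rb}{\sup_{\overline{\Omega}}|\nabla u|}\Big)^2=:S_1^2 .
\]

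\emph{(Identification with $\|Q\|_1$.)} It remains to show $S_1=\sup\{\lb Q,u\rb: u\in\cD(\bR^N),\ \sup_{\overline{\Omega}}|\nabla u|\le1\}=\|Q\|_1$. This is the delicate point, and I expect it to be the main obstacle, because on a non-convex $\overline{\Omega}$ the condition $\sup_{\overline{\Omega}}|\nabla u|\le1$ is strictly weaker than $u\in\Lip_1(\overline{\Omega})$ (a smooth function may have small gradient on $\overline{\Omega}$ yet large oscillation between Euclidean-close points joined only through the complement of $\Omega$). Here I would invoke Proposition \ref{jimenez2}: choose $p=\sigma|p|\in\cM_T(\overline{\Omega},\bR^N)$ with $-\di p=Q$, $|\sigma|=1$ and $|p|(\overline{\Omega})=\|Q\|_1$. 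Then for $u\in\cD(\bR^N)$, using $\sigma(x)\in T_{|p|}(x)$ so that $P_{|p|}(x)\sigma(x)=\sigma(x)$,
\[
\lb Q,u\rb=\int_{\overline{\Omega}}\sigma\cdot\nabla_{|p|}u\,d|p|=\int_{\overline{\Omega}}\sigma\cdot\nabla u\,d|p|\le\int_{\overline{\Omega}}|\nabla u|\,d|p|\le\Big(\sup_{\overline{\Omega}}|\nabla u|\Big)\|Q\|_1,
\]
whence $S_1\le\|Q\|_1$; crucially only $\sup_{\overline{\Omega}}|\nabla u|$, not the Lipschitz constant, appears, which is exactly how the machinery of \cite{B-Ch-Jimenez} removes the convexity hypothesis on $\Omega$. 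For the reverse inequality, any $v\in\Lip_1(\overline{\Omega})$ extends by the McShane formula to a $1$-Lipschitz function on $\bR^N$ whose mollifications are smooth, remain $1$-Lipschitz on $\overline{\Omega}$, and converge uniformly on $\overline{\Omega}$; since $\lb Q,\cdot\rb$ is a $\|\cdot\|_1$-limit of measures it is continuous for uniform convergence on $\Lip_1(\overline{\Omega})$, so $\sup_{\cD(\bR^N)\cap\Lip_1(\overline{\Omega})}\lb Q,u\rb=\|Q\|_1$. As $\cD(\bR^N)\cap\Lip_1(\overline{\Omega})\subseteq\{u:\sup_{\overline{\Omega}}|\nabla u|\le1\}$, this gives $\|Q\|_1\le S_1$, and combining the two bounds proves $S_1=\sup_{\cD(\bR^N)\cap\Lip_1(\overline{\Omega})}\lb Q,u\rb=\|Q\|_1$, which is \eqref{Z2}.

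\emph{(Existence of a maximizer.)} Finally, modulo constants $\Lip_1(\overline{\Omega})/\bR$ is equibounded and equicontinuous, hence compact in $C(\overline{\Omega})$ by the Arzel\`a--Ascoli theorem; since $v\mapsto\lb Q,v\rb$ is continuous for uniform convergence and constant-invariant, it attains its supremum at some $\hu\in\Lip_1(\overline{\Omega})$, giving $\lb Q,\hu\rb=\|Q\|_1$.
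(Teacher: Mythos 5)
Your proposal is correct and follows the same skeleton as the paper's proof: minimax via Proposition \ref{min-max} with $\cM_{\Lambda_0}(\overline{\Omega})$ weak-$\ast$ compact, explicit solution of the inner maximization $\sup_{C}\lb C,\nabla u\otimes\nabla u\rb=\Lambda_0\sup_{\overline{\Omega}}|\nabla u|^2$, the scaling $u\mapsto su$ to pass to the homogeneous quotient, and Arzel\`a--Ascoli on $\Lip_1(\overline{\Omega})/\bR$ for the maximizer. Two of your steps are, however, done differently and arguably better. First, your treatment of the inner problem (direct bound $\nabla u\cdot A\nabla u\le\|A\|\,|\nabla u|^2$ together with $|C|(\overline{\Omega})\le\tr C(\overline{\Omega})\le\Lambda_0$, with equality attained by the rank-one Dirac $\Lambda_0(e\otimes e)\delta_{x_0}$) is cleaner and more rigorous than the paper's argument via Cauchy--Schwarz and the eigenvalue relation \eqref{rnla}, which only identifies the form of the optimal $A$ heuristically. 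Second, and more substantively: the paper simply asserts that ``it is sufficient to calculate the supremum in \eqref{r3.5} over $\Lip_1(\overline{\Omega})$,'' i.e.\ it identifies $\sup_u\lb Q,u\rb/\|\nabla u\|_{L^\infty(\overline{\Omega})}$ with $\|Q\|_1$ without proof, whereas you supply a genuine two-sided argument, using the tangential representation $Q=-\di p$ of Proposition \ref{jimenez2} for the upper bound and McShane extension plus mollification for the lower bound. This is exactly the delicate point when $\Omega$ is not convex, and your version makes explicit where the machinery of \cite{B-Ch-Jimenez} enters (the paper itself only uses Proposition \ref{jimenez2} later, in Proposition \ref{Y-p}). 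One caveat you inherit from the paper rather than create: your sandwich closes because Proposition \ref{jimenez2} equates the minimal mass $|p|(\overline{\Omega})$ with the Lipschitz supremum $\|Q\|_1$; for non-convex $\overline{\Omega}$ this identification requires reading $\Lip_1(\overline{\Omega})$ with respect to the geodesic rather than the Euclidean distance (compare Example \ref{ncx}, where the two readings give $\sqrt2$ versus $1$), so the two occurrences of $\|Q\|_1$ in your argument must be understood consistently in that sense.
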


\begin{proof}

Due to relaxed definition of $J(C)$, the value of  $Y$ takes the form of \eqref{df-Y}, i.e.,
$$
Y = \inf_{C\in\cM_{\Lambda_0}} J(C) = \inf_{C\in\cM_{\Lambda_0}} \sup_{u \in \cD(\R^N)} \big( 2\lb Q, u \rb - \lb C, \nabla u \otimes \nabla u \rb  \big).
$$

In order to apply the minimax argument presented in  Proposition \ref{min-max} let us introduce
$$
\cL(C, u) = \langle C, \nabla u \otimes \nabla u \rangle - 2\langle Q, u\rangle.
$$
For all $\alpha\in \bR$ and $u_0 \in \cD(\bR^N)$ the set $\{C\in \cM_{\Lambda_0}: \cL(C, u_0)\ge \alpha\}$ is closed and convex. For all $C_0\in \cM_{\Lambda_0}$, the function $\cL(C_0, \cdot)$ is convex on $\cD(\bR^N)$.  Moreover, $\cD(\bR^N)$ is a convex set, while $\cM_{\Lambda_0}$ is convex and compact in  the topology of weak convergence. Application of  Proposition \ref{min-max} yields
\begin{align*}
Y &= \sup_{u \in \cD(\R^N)} \inf_{C\in \cM_{\Lambda_0}} 
\big( 2 \lb Q, u \rb - \lb C, \nabla u \otimes \nabla u \rb \big) \\
&= \sup_{u \in \cD(\R^N)} 
\big( 2 \lb Q, u \rb - \sup_{C\in \cM_{\Lambda_0}} \lb C, \nabla u \otimes \nabla u \rb \big).
\end{align*}

Let us concentrate on calculating $\sup_{C\in \cM_{\Lambda_0}} \lb C, \nabla u \otimes \nabla u \rb$ for a fixed $u \in \cD(\R^N)$. We shall use the notation introduced in \eqref{df-muA}. For a fixed $u$, for any $C\in \cM_{\Lambda_0}$ we have
$$
\lb C, \nabla u \otimes \nabla u \rb= \int_{\overline{\Omega}} A\nabla u \cdot \nabla u \, d|C|
\le  \|A\nabla u\|_{L^2(|C|)} \| \nabla u \|_{L^2(|C|)}.
$$
The equality holds if and only if 
$A\nabla u = \lambda \nabla u$ for a non-negative number  $\lambda$. If we recall that $\| A\| =1$, then we see that $\lambda\in [0,1].$ Since $\Lambda_0 \ge \tr C(\overline{\Omega})$, we see that maximization of $\lambda$ requires
\begin{equation}\label{rnla}
 \lambda = \frac 1{|C|(\overline{\Omega})}\left(\Lambda_0 - \int_{\overline{\Omega}} \sum_{\lambda_i \neq \lambda} \lambda_i \,d|C|\right).   
\end{equation}
Hence, we will make $\lambda$  maximal and equal to 1  when the other eigenvalues of $A$ are zero. This means that matrix $A$ has the following  form
$$
A =  \frac{\nabla u}{|\nabla u|} \otimes \frac{\nabla u}{|\nabla u|}.
$$
As a result we showed that
\begin{equation*}
\sup_{C\in \cM_{\Lambda_0}} \lb C, \nabla u \otimes \nabla u \rb
\le \int_{\overline{\Omega}} A\nabla u \cdot \nabla u\, d\mu =\int_{\overline{\Omega}} |\nabla u |^2\,d\mu   
\end{equation*} 
for a certain measure $\mu$ which remains unspecified, yet. 

Taking into account the constraint \eqref{rnla}, we observe that $ \mu$ is a finite measure, i.e.
$$
\mu(\overline{\Omega})=\int_{\overline{\Omega}} 1 \,d\mu = \int_{\overline{\Omega}} \hbox{tr}\, A\, d\mu =  \Lambda_0.
$$
Observe also that we always have
$$
\int_{\overline{\Omega}} |\nabla u |^2 \, d\mu \le \max_{x\in \overline{\Omega}}|\nabla u(x) |^2 
\mu(\overline{\Omega}),
$$
with the equality attained only if $|\nabla u (x)| = \max_{x\in \overline{\Omega}}|\nabla u(x) | \equiv \| \nabla u \|_{L^\infty}$ for $\mu$-a.e. $x\in \overline{\Omega}$. 
Therefore, for a fixed $u\in \cD(\bR^N)$, the supremum
$$
\sup_{C\in \cM_{\Lambda_0}} \lb C, \nabla u \otimes \nabla u \rb
$$
is achieved exactly when $C$ is supported in the set $\{ x : |\nabla u(x)| = \| \nabla u \|_\infty \}$ and has values proportional to $\nabla u \otimes \nabla u$. Thus
\begin{equation} \label{Y-max}
Y = 
\sup_{u \in \cD(\R^N)} \big( 2 \lb Q,u \rb - \Lambda_0 \| \nabla u \|^2_{L^\infty} \big).
\end{equation}
Substituting a function $tu$ with $t\in \R$ instead of $u$ in \eqref{Y-max}, we find a maximum of a second degree polynomial with respect to $t$, which leads to a conclusion that  
\begin{equation} \label{r3.5}
Y = \sup_{u\in \cD(\R^N)} \left( \frac{\lb Q,u \rb^2}{\Lambda_0 \| \nabla u \|^2_{L^\infty}} \right).
\end{equation}
As a consequence we see that it is sufficient to calculate the supremum in \eqref{r3.5} over the space $\Lip_1(\overline{ \Omega})$.
Moreover, the supremum in \eqref{Y-max} is attained. Indeed, let us suppose that $u_n\in \cD(\bR^N)$ is a maximizing sequence and $x_0 \in \Omega$ is fixed. Then, the sequence $u_n - u_n(x_0)$ is also maximizing and bounded in $C(\overline{\Omega})$. Hence, $\|\nabla u_n\| \le 1$ and the Arzela-Ascoli Theorem imply that there  is a uniformly convergent subsequence  $u_{n_k} - u_{n_k} \rightrightarrows \hat u$. Moreover, $\hat u\in \Lip_1(\overline{ \Omega})$.
\end{proof}
\begin{proposition} \label{Y-p}
Assume $Q \in \cM_{0,1}(\overline{\Omega})$, $\Lambda_0 >0$ is fixed, and $Y$ is the optimal value defined by \eqref{df-Y}. We have
\begin{equation}\label{Z1}
    \sqrt{Y} = \frac{1}{\sqrt{\Lambda_0}} 
\inf_{p\in \Sigma_Q} |p|(\Omega),
\end{equation}
and the minimizer $\check p \in \cM_T(\overline{\Omega}, \R^N)$ exists.
\end{proposition}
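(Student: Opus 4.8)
The plan is to obtain this proposition essentially as a corollary of Proposition~\ref{Y-Q} together with the final assertion of Proposition~\ref{jimenez2}. Proposition~\ref{Y-Q} already delivers the identity
$$
\sqrt{Y} = \frac{1}{\sqrt{\Lambda_0}} \|Q\|_1,
$$
so the entire content of \eqref{Z1} reduces to re-expressing the Kantorovich norm $\|Q\|_1$ as the least total variation among tangent vector measures whose divergence reproduces the source $Q$. In other words, once I identify $\|Q\|_1 = \inf_{p \in \Sigma_Q} |p|(\overline{\Omega})$ and show the infimum is attained, the proposition follows by substitution.

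To carry this out I would apply the last statement of Proposition~\ref{jimenez2} to the fixed datum $f = Q \in \cM_{0,1}(\overline{\Omega})$. That result produces a measure $\overline{\lambda} \in \cM_T(\overline{\Omega}, \R^N)$ with
$$
\|Q\|_1 = |\overline{\lambda}|(\overline{\Omega}) = \min_{\lambda \in \cM_T(\overline{\Omega}, \R^N)} \left\{ |\lambda|(\overline{\Omega}) \colon -\di \lambda = Q \right\}.
$$
The one piece of bookkeeping is to recognize that the admissible set in this minimization is precisely $\Sigma_Q$: the constraint $-\di \lambda = Q$ is the same as $\di \lambda + Q = 0$, and the requirement $\lambda \in \cM_T(\overline{\Omega}, \R^N)$ is built into the definition of $\Sigma_Q$. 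Hence the stated minimum is achieved at some $\check p := \overline{\lambda} \in \Sigma_Q$, giving $|\check p|(\overline{\Omega}) = \|Q\|_1 = \inf_{p \in \Sigma_Q} |p|(\overline{\Omega})$. Combining this with Proposition~\ref{Y-Q} yields \eqref{Z1} and exhibits the desired minimizer $\check p \in \cM_T(\overline{\Omega}, \R^N)$.

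I do not expect a genuine analytic obstacle, since all the hard work — the existence of a tangent representative realizing the Kantorovich norm, and the fact that the infimum over $\Sigma_Q$ is in fact a minimum — is already packaged into the cited characterization of $\cM_{0,1}(\overline{\Omega})$. The only points demanding a moment of care are cosmetic: matching the sign convention between $-\di \lambda = Q$ in Proposition~\ref{jimenez2} and $\di p + Q = 0$ in the definition of $\Sigma_Q$, and reconciling the domain of integration, since \eqref{Z1} is written with $|p|(\Omega)$ while Proposition~\ref{jimenez2} is phrased with $|\lambda|(\overline{\Omega})$. For the latter I would simply note that every competitor $p$ is compactly supported in $\overline{\Omega}$, so that $|p|(\Omega)$ and $|p|(\overline{\Omega})$ may be used interchangeably here, and adopt $|p|(\overline{\Omega})$ for consistency with \eqref{Z2}.
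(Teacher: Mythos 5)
Your proposal is correct and follows exactly the paper's own argument: it combines the identity $\sqrt{Y}\sqrt{\Lambda_0}=\|Q\|_1$ from Proposition \ref{Y-Q} with the final assertion of Proposition \ref{jimenez2} identifying $\|Q\|_1$ as the attained minimum of $|\lambda|(\overline{\Omega})$ over $\lambda\in\cM_T(\overline{\Omega},\R^N)$ with $-\di\lambda=Q$, i.e.\ over $\Sigma_Q$. Your extra remarks on the sign convention and on $|p|(\Omega)$ versus $|p|(\overline{\Omega})$ are harmless clarifications of details the paper leaves implicit.
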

\begin{proof}
By \eqref{r3.5} 
we already know that 
$$
\sqrt{Y} \sqrt{\Lambda_0}= \| Q\|_1 .
$$
According to Proposition \ref{jimenez2} we note
\begin{equation}\label{eq36}
 \|Q\|_1 = \min_{\lambda \in \cM_T(\overline{\Omega}, \R^N)} \left\{{\int_{\bar\Omega}  d}|\lambda| \colon -\di \lambda = Q \right\},   
\end{equation}
so the claim follows. Also, the Proposition \ref{jimenez2} yields the existence of $\check p \in  \cM_T(\R^N, \R^N) $  {minimizing \eqref{eq36}.}
\end{proof}

\bigskip
A description of the optimal tensor $C$ is given by the following result.
\begin{theorem}
\label{th:main}
Let us suppose that $Q\in \cM_{0,1}(\overline{\Omega})$, and $\Lambda_0 >0$.
Then, 
\begin{enumerate}

\item A maximizer $\hu$ in \eqref{Z2} and a minimizer $\check{p}$ in \eqref{Z1}, with polar decomposition $\check p = \sigma \mu$, {$\mu = |\check p|$,} are related by the following condition,
$$
|\check p|(\overline{\Omega}) = \lb - \di \check p, \hat u \rb = \lb \check p, \nabla \hat u \rb  .
$$
Moreover $\sigma = \nabla_\mu \hu$ $\mu$-a.e.
\item If we define  $\hat C$  by the following formula
\begin{equation}\label{df-C}
 \hat C = \frac{\Lambda_0}{\|Q\|_1}\nabla_\mu \hu \otimes \nabla_\mu \hu\, \mu,
\end{equation}
then $\hat C$ is a solution to \eqref{df-Y}, i.e. $Y = J(\hat C)$. 
\end{enumerate}
\end{theorem}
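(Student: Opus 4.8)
The plan is to derive both parts from the duality already in hand, namely $\|Q\|_1 = |\check p|(\overline{\Omega})$ (combine Propositions \ref{Y-Q} and \ref{Y-p}), together with the integration-by-parts formula \eqref{jimenez2f}. For Part 1 the first chain of equalities is almost immediate. Since $\check p \in \Sigma_Q$ means $-\di \check p = Q$ and $\hu$ is a maximizer in \eqref{Z2}, I would write
\[
\langle -\di \check p, \hu \rangle = \langle Q, \hu \rangle = \|Q\|_1 = |\check p|(\overline{\Omega}),
\]
the last equality being exactly the content of Proposition \ref{Y-p}. The identity $\langle -\di \check p, \hu \rangle = \langle \check p, \nabla \hu \rangle$ is then just \eqref{jimenez2f} with $\lambda = \check p$ and $u = \hu \in \Lip(\overline{\Omega})$, and expanding the right-hand side through the polar decomposition $\check p = \sigma \mu$ gives $|\check p|(\overline{\Omega}) = \int_{\overline{\Omega}} \sigma \cdot \nabla_\mu \hu \, d\mu$.

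To extract $\sigma = \nabla_\mu \hu$ I would exploit the two pointwise bounds $|\sigma(x)| = 1$ ($\mu$-a.e., since $\mu = |\check p|$) and $|\nabla_\mu \hu(x)| \le 1$ ($\mu$-a.e., because $\hu \in \Lip_1(\overline{\Omega})$ and $\nabla_\mu$ does not increase the Lipschitz constant). Pointwise Cauchy--Schwarz gives $\sigma \cdot \nabla_\mu \hu \le 1$ $\mu$-a.e., while the displayed identity says the integral equals $\mu(\overline{\Omega}) = \int_{\overline{\Omega}} 1 \, d\mu$. Equality of the integrals forces equality $\mu$-a.e., i.e. $\sigma \cdot \nabla_\mu \hu = 1 = |\sigma|\,|\nabla_\mu \hu|$; since $|\sigma| = 1$ this yields simultaneously $|\nabla_\mu \hu| = 1$ and $\sigma = \nabla_\mu \hu$ $\mu$-a.e.

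For Part 2 I would first check admissibility: $\hC$ is positive semidefinite, being a nonnegative multiple of the rank-one tensor $\nabla_\mu \hu \otimes \nabla_\mu \hu$ against the positive measure $\mu$, and using $|\nabla_\mu \hu| = 1$ from Part 1,
\[
\tr \hC(\overline{\Omega}) = \frac{\Lambda_0}{\|Q\|_1}\int_{\overline{\Omega}} |\nabla_\mu \hu|^2 \, d\mu = \frac{\Lambda_0}{\|Q\|_1}\,\mu(\overline{\Omega}) = \Lambda_0,
\]
so $\hC \in \cM_{\Lambda_0}$ and hence $J(\hC) \ge Y$ trivially. It remains to prove $J(\hC) \le Y$. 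For an arbitrary $u \in \cD(\bR^N)$ I would rewrite both pairings in terms of the single scalar field $g := \nabla_\mu \hu \cdot \nabla_\mu u$: by \eqref{jimenez2f} and $\sigma = \nabla_\mu \hu$ we get $\langle Q, u \rangle = \int_{\overline{\Omega}} g \, d\mu$; and since $\nabla_\mu \hu \in T_\mu$, the orthogonal projection lets me replace $\nabla u$ by $\nabla_\mu u$, giving $\langle \hC, \nabla u \otimes \nabla u \rangle = \frac{\Lambda_0}{\|Q\|_1}\int_{\overline{\Omega}} g^2 \, d\mu$. Therefore
\[
2\langle Q, u \rangle - \langle \hC, \nabla u \otimes \nabla u \rangle = 2\int_{\overline{\Omega}} g \, d\mu - \frac{\Lambda_0}{\|Q\|_1}\int_{\overline{\Omega}} g^2 \, d\mu.
\]
Applying Cauchy--Schwarz $\int_{\overline{\Omega}} g \, d\mu \le (\int_{\overline{\Omega}} g^2 \, d\mu)^{1/2}\,\mu(\overline{\Omega})^{1/2}$ and maximizing the resulting concave function of $s := \int_{\overline{\Omega}} g^2 \, d\mu \ge 0$ gives the upper bound $\|Q\|_1^2/\Lambda_0 = Y$ (recall $\mu(\overline{\Omega}) = \|Q\|_1$ and $Y = \|Q\|_1^2/\Lambda_0$ from Proposition \ref{Y-Q}). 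Taking the supremum over $u$ yields $J(\hC) \le Y$, which together with $J(\hC) \ge Y$ proves $Y = J(\hC)$.

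The step requiring most care is the pair of facts about $\nabla_\mu$: the bound $|\nabla_\mu \hu| \le 1$ $\mu$-a.e. and the identity $\nabla_\mu \hu \cdot \nabla u = \nabla_\mu \hu \cdot \nabla_\mu u$ for smooth $u$. Both rest on $\nabla_\mu$ being the continuous extension of $u \mapsto P_\mu \nabla u$ with $P_\mu$ the orthogonal projection onto $T_\mu$, which is norm-nonincreasing, so that the uniform bound $|P_\mu \nabla u| \le \|\nabla u\|_\infty$ survives the weak-$\ast$ passage to the Lipschitz limit $\hu$; the orthogonality identity then follows because $\nabla_\mu \hu \in T_\mu$ is orthogonal to $(I - P_\mu)\nabla u$. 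Everything else reduces to Cauchy--Schwarz and the duality already established.
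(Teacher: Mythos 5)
Your argument is correct and follows essentially the same route as the paper: part 1 via the chain $\lb Q,\hu\rb=\|Q\|_1=|\check p|(\overline{\Omega})$ together with \eqref{jimenez2f} and saturation of the pointwise Cauchy--Schwarz inequality (where you usefully spell out why $\sigma\cdot\nabla_\mu\hu=1$ with $|\sigma|=1$, $|\nabla_\mu\hu|\le 1$ forces $\sigma=\nabla_\mu\hu$), and part 2 via the trace computation and reduction of $-2E(\hC,u)$ to a scalar functional of $\nabla_\mu u\cdot\nabla_\mu\hu$. The only cosmetic differences are that the paper bounds $2x-\tfrac{\Lambda_0}{\|Q\|_1}x^2\le\tfrac{\|Q\|_1}{\Lambda_0}$ pointwise and exhibits the optimal test function $u=\tfrac{\|Q\|_1}{\Lambda_0}\hu$, whereas you use integral Cauchy--Schwarz plus a one-variable maximization and invoke $J(\hC)\ge Y$ from admissibility; both are equally valid.
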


\begin{proof}
Let $\check p$ be the optimal element provided by Proposition \ref{Y-p}. By Propositions \ref{Y-Q} and \ref{Y-p} for any $1$-Lipschitz function $u$ we have
$$
\lb Q, \hu\rb = \| Q \|_1  = |\check{p}|(\overline{\Omega}) \ge 
\int_{\overline{\Omega}} \sigma \cdot \nabla_\mu u \, d\mu\  =  \lb \check p, \nabla u\rb .
$$
Next, let $\hu$ be a $1$-Lipschitz function satisfying $\lb \nabla \hu, \check p \rb = \|Q\|_1$. Since
\[
\|Q\|_1 = \lb \nabla \check p,\hu  \rb = \int_{\overline{\Omega}} \nabla_\mu \hu \cdot \sigma \dd \mu \le \int_{\overline{\Omega}} \dd \mu = \|Q\|_1,
\]
then optimality implies that $\nabla_\mu \hu = \sigma$ (or equivalently, $\nabla_\mu \hu \cdot \sigma = 1$) $\mu$-a.e.
and part 1. follows. 

Now, we proceed to prove part 2. First note that $\tr \hC = \frac{\Lambda_0}{\|Q\|_1} |\check p|$ and hence $\tr \hC(\overline{\Omega}) = \Lambda_0$, as required. Moreover, the functional $E(\hC,u)$ takes the form 
\begin{align*}
E(\hC,u) & = 2 \lb Q,u \rb - \lb \hC, \nabla u \otimes \nabla u \rb \\
& = 2 \lb -\di \check p,u \rb - \int_{\overline{\Omega}} \nabla u \otimes \nabla u \dd \hC \\
& = 2 \int_{\overline{\Omega}} \nabla_\mu u \cdot \sigma \dd \mu - \frac{\Lambda_0}{\|Q\|_1} \int_{\overline{\Omega}} (\nabla_\mu u \cdot \sigma)^2 \dd \mu \\
& = \int_{\overline{\Omega}} \left( 2 (\nabla_\mu u \cdot \sigma) - \frac{\Lambda_0}{\|Q\|_1} (\nabla_\mu u \cdot \sigma)^2 \right) \dd \mu
 \end{align*}
 Since for all real $x$ we have $2x-\frac{\Lambda_0}{\|Q\|_1}x^2 \le \frac{\|Q\|_1}{\Lambda_0}$, then we deduce 
\[
E(\hC,u) \le \int_{\overline{\Omega}} \frac{\|Q\|_1}{\Lambda_0} \dd \mu =
 \frac{\|Q\|_1}{\Lambda_0} \int_{\overline{\Omega}}\dd |\check p| =
\frac{\|Q\|_1^2}{\Lambda_0} \quad (= Y).
\]
This already shows that $\sup_u E(\hC,u) \le Y$. 
Now, we take $t = \frac{\|Q\|_1}{\Lambda_0}$, we see that $u:=t\hu$,  yields an equality in the previous estimates, which means that $\hC$ is optimal.
\end{proof}

\begin{remark}
It is interesting to check if the optimal tensor measure $\hat C$ and measure $\mu$ satisfy the assumptions of the theory developed in \cite{rybka-azg}.
\end{remark}

Once we found the optimal $\hat C$, i.e. a solution to \eqref{df-Y}, we would like to solve the minimization problem \eqref{df-hau}, i.e.
\begin{equation}\label{opt-u}
    \inf\{ E(\hat C, u): \ u\in \cD(\bR^N)\}.
\end{equation}
Here is our observation:
\begin{proposition}\label{existence}
Let us suppose that $Q$ satisfies the assumptions of Theorem \ref{th:main} and $\hat C$ is given by formula \eqref{df-C}. If $\hu\in \Lip_1(\overline{\Omega})$ is {a maximizer of (\ref{Z2})}, 
then {$\tilde u := \frac{\|Q\|_1}{\Lambda_0} \hu$}  is a solution to \eqref{opt-u}, i.e.
$$
E(\hat C, \tilde u) =\inf\{ E(\hat C, u): \ u\in \cD(\bR^N)\}.
$$
\end{proposition}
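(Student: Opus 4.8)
The plan is to read off the value of the infimum in \eqref{opt-u} from what has already been established, and then verify by a direct computation that the Lipschitz function $\tilde u$ realizes that value. By Theorem \ref{th:main} the tensor $\hat C$ is optimal, that is $J(\hat C) = Y = \|Q\|_1^2/\Lambda_0$. Recalling the definition \eqref{J-rel}, namely $J(\hat C) = \sup_{u\in\cD(\bR^N)}\big(-2E(\hat C,u)\big)$, this identity rewrites as $\inf_{u\in\cD(\bR^N)} E(\hat C,u) = -\tfrac12 J(\hat C) = -\|Q\|_1^2/(2\Lambda_0)$. Hence it remains only to check that $E(\hat C,\tilde u)$ equals this number.

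To evaluate $E(\hat C,\tilde u)$ I would use the tangent calculus of Proposition \ref{jimenez2}. Since $\hat C = \tfrac{\Lambda_0}{\|Q\|_1}\,\nabla_\mu\hu\otimes\nabla_\mu\hu\,\mu$ by \eqref{df-C}, the pairing of $\hat C$ with $\nabla u\otimes\nabla u$ extends to an arbitrary Lipschitz $u$ through the operator $\nabla_\mu$, yielding $\lb \hat C,\nabla u\otimes\nabla u\rb = \tfrac{\Lambda_0}{\|Q\|_1}\int_{\overline\Omega}(\nabla_\mu\hu\cdot\nabla_\mu u)^2\,d\mu$. This is precisely the computation already carried out inside the proof of Theorem \ref{th:main}, so the present proposition is essentially a restatement of the equality case obtained there, now read as an existence statement for \eqref{opt-u}.

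Next I would substitute $\tilde u = t\hu$ with $t = \|Q\|_1/\Lambda_0$. Part 1 of Theorem \ref{th:main} gives $\sigma = \nabla_\mu\hu$ and $\nabla_\mu\hu\cdot\sigma = 1$ $\mu$-a.e., hence $|\nabla_\mu\hu|^2 = 1$ $\mu$-a.e. and $\nabla_\mu\tilde u\cdot\nabla_\mu\hu = t$. Together with $\mu(\overline\Omega) = |\check p|(\overline\Omega) = \|Q\|_1$ this gives $\lb \hat C,\nabla\tilde u\otimes\nabla\tilde u\rb = \Lambda_0 t^2 = \|Q\|_1^2/\Lambda_0$. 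For the linear term, since $\hu$ is a maximizer in \eqref{Z2} one has $\lb Q,\hu\rb = \|Q\|_1$, whence $\lb Q,\tilde u\rb = t\|Q\|_1 = \|Q\|_1^2/\Lambda_0$. Plugging these into \eqref{df-EC} produces $E(\hat C,\tilde u) = \tfrac12\|Q\|_1^2/\Lambda_0 - \|Q\|_1^2/\Lambda_0 = -\|Q\|_1^2/(2\Lambda_0)$, which matches the infimum identified above.

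The one point that requires care — and the only genuine obstacle — is that $\tilde u$ is merely Lipschitz and in general does not lie in $\cD(\bR^N)$, so $E(\hat C,\tilde u)$ must be interpreted through the continuous extension of $u\mapsto\nabla_\mu u$ to $\Lip(\overline\Omega)$ furnished by Proposition \ref{jimenez2}, and one must confirm that this extended value is consistent with the infimum taken over the dense class $\cD(\bR^N)$. This is exactly why the statement is an existence (relaxation) result: the infimum over $\cD(\bR^N)$ need not be attained within $\cD(\bR^N)$, but it is attained by the Lipschitz function $\tilde u$ in the larger admissible class on which $E(\hat C,\cdot)$ is defined.
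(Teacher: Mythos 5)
Your proposal is correct and follows essentially the same route as the paper: both substitute $\tilde u = \tfrac{\|Q\|_1}{\Lambda_0}\hu$ into $E(\hat C,\cdot)$, use $\sigma=\nabla_\mu\hu$ with $|\sigma|=1$ and $\mu(\overline\Omega)=\|Q\|_1$ to evaluate the quadratic and linear terms, and identify the result with the optimal value $-\tfrac12 J(\hat C)=-\|Q\|_1^2/(2\Lambda_0)$ coming from Theorem \ref{th:main}. The only cosmetic difference is that the paper re-derives the upper bound via the pointwise inequality $2x-ax^2\le 1/a$ and its equality case $\sigma\cdot\nabla_\mu\hu=1$, whereas you cite $J(\hat C)=Y$ directly; the content is the same.
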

\begin{proof}
The minimization of $E(\hat C, \cdot)$ is equivalent to maximization of $-E(\hat C, \cdot).$ {We stick $\tilde u$ into $E(\hat C, \cdot)$,}
{\begin{eqnarray*}
-2 E(\hat C, \tilde u) &= &\frac{\|Q\|_1}{\Lambda_0}\left(2 \int_{\overline{\Omega}}  \sigma \cdot \nabla_\mu \hu \,d\mu -
\int_{\overline{\Omega}}  (\sigma \cdot \nabla_\mu \hu )^2\,d\mu\right)\\
&=& \frac{\|Q\|_1}{\Lambda_0}\left( \int_{\overline{\Omega}}
[1 - (1-\sigma \cdot \nabla_\mu \hu )^2 ]\,d\mu \right) \le \|Q\|_1.
\end{eqnarray*}
The equality above holds if and only if $\sigma \cdot \nabla_\mu \hu =1$ for $\mu$-a.e. $x\in \bar\Omega$. This is exactly the case for our choice of $\tilde u$.
}
\end{proof}

A few comments are in order. 
We solved here problem \eqref{df-hau} for an optimal $\hat C$ without knowing that $\mu$ is a multijunction measure, which was the assumption underlying analysis of \cite{rybka-azg}. At the same time we established a regularity result, i.e. $\tilde u\in \Lip_1(\bar\Omega)\subset H^1_\mu$. We do not have any analogue in \cite{rybka-azg}.

{\begin{remark}
It is also  interesting to see the optimal heat flux for the optimal $\hC$. Namely, we may now calculate $\tilde p = \hC \nabla_\mu\tilde u$ and we can see that $\tilde p = \check p.$ Indeed,
$$
\tilde p = \frac{\Lambda_0}{\|Q\|_1}  \lb \nabla_\mu \hu \otimes \nabla_\mu \hu, \nabla_\mu \tilde u \rb \mu = \nabla_\mu \hu \mu = \check p.
$$
\end{remark}}

\section{Examples} \label{examples}

Here, we present  a series of examples, which are illustrations of the main Theorem \ref{th:main}. We will follow a uniform style of exposition, starting from the definition of $Q$ and setting $\Lambda_0$. {We denote by $e_i$, $i=1,2$, the unit vectors of the coordinate axes.} Then, we\\
(a) check that $Q$ is an element of {$\cM_{0,1}(\overline{\Omega})$};\\
(b) find $\hat u$ yielding $\lb Q, \hu \rb = \| Q\|_1$, see \eqref{Z2};\\
(c) find $\check p\in \Sigma_Q$, which minimizes $|p|(\Omega)$ among elements of $\Sigma_Q$, see \eqref{Z1};\\
(d) write out $\hat C$, defined in \eqref{df-C}, which minimizes $J(C)$.

{We would like to emphasize that in the characterization of $\cM_{0,1}(\bar\Omega)$ provided by Proposition \ref{jimenez2} the convexity of $\Omega$ was not mentioned. Indeed, the authors of \cite{B-Ch-Jimenez} remark that the geodesic distance may be used. We will see the consequences in the example below.}


\begin{example}\label{ncx}
Let us take any $\Lambda_0>0$ and set  $\Omega = (-1, 1)^2 \setminus \{ (x_1,x_2) \in \bR^2: | x_2| \le -\frac12 x_1\}$. We define
$Q = \delta_{A}  - \delta_B, $ where $A=(-\frac12, \frac12)$, $B= {(-\frac12, -\frac12)} $. Then,\\
(1) $$\hu (x_1,x_2)= \left\{
\begin{array}{ll}
\frac {\sqrt 2}2 (x_2-x_1)& (x_1,x_2)\in \Omega, \ x_2>0, \hbox{ and } x_2>x_1,\\
\frac {\sqrt 2}2 (x_2+x_1)& (x_1,x_2)\in \Omega, \ x_2<0 \hbox{ and } x_2<-x_1,\\
0 & \hbox{otherwise};
\end{array}
\right.
$$
(2) $$
\check p = f_1 \cH^1 \LC [A,0] +f_2  \cH^1 \LC [0,B];
$$
where 
$f_1 = \frac {\sqrt 2}2(e_2 -e_1) $, $f_2 =-\frac {\sqrt 2}2(e_2 +e_1) $;\\
(3) and 
$$
\hat C =  \Lambda_0 \frac{\sqrt 2}2 
(f_1 \otimes f_1 \cH^1 \LC [A,0] +f_2\otimes f_2 \cH^1 \LC [0,B]).
$$
\end{example}
\begin{proof}
Since $\lb Q, u \rb = u(A) - u(B)$, we deduce  
that ${Q \in} \cM_{0,1}(\bar\Omega)$. Now, $\Omega$ is not convex and the geodesic distance is defined as the infimum of lengths of curves joining $A$ and $B$. Suppose that $\gamma$ is a Lipschitz path connecting $A$ and $B$ and $u$ is any element of $\Lip(\bar \Omega)$, $B= \gamma(1)$, $A = \gamma(0)$. Then,
\begin{equation}\label{r-ncop}
    \lb Q, u\rb = u(B) - u(A) = u(\gamma(1)) - u(0) + u(0) - u(\gamma(0)) \le \dist(A, 0) + \dist(B,0) = \sqrt 2,
\end{equation}
because $\Omega$ is star-like with respect to 0 and we can see that $ \dist(A, 0)=  \frac {\sqrt 2}2=  \dist(B, 0)$. We can easily check that the function defined in part (1) above turns inequality in \eqref{r-ncop} into equality. Hence, $\|Q\|_1 = \sqrt 2$.

We will use Theorem \ref{th:main}, part 1. to find the optimal $\check p$, which must satisfy,
$$
|\check p |(\bar \Omega)| = \lb \check p, \nabla_\mu \hu\rb 
= \lb - \di \check p, \hu\rb,
$$
where $\check p = \sigma\mu$, $\sigma(x) \in T_\mu(x)$ and $|\sigma(x)|=1$ for $\mu$-a.e. $x\in \bar\Omega$. Now, we are looking for a Lipschitz curve of length $\sqrt2$ and  tangent to $\nabla \hu$, which would be a support of $\mu$. 
{The easiest choice is the sum of intervals $[A,(0,0)]\cup[(0,0),B]$. Then,} it is easy to see that $\check p$ defined in part (2) has  the desired properties, in particular  
$-\di \check p = Q$.

Now, (\ref{df-C}) implies that
(3) follows immediately. 
\end{proof}

\medskip


\begin{example} (Brothers' benchmark) \label{brothers}\\
Let $\Omega= B(0,1)\subset \bR^2$, {$\Lambda_0$ is any positive number} and $$
Q = g(x_1,x_2) \cH^1\LC \d B(0,1) ,
$$
where $g(x_1,x_2) = {-4 x_1x_2}$. Then,
\begin{equation}\label{p3r2}
\hat u(x_1,x_2) = \left\{
\begin{array}{ll}
- x_2, & x_1 \ge |x_2|, \\
{-} x_1,  & x_2 \ge |x_1|, \\
 x_2,  & -x_1 > |x_2|,\\
  x_1, & -x_2 > |x_1|.
\end{array}
\right. \ 
\end{equation}
and $\check p = \sigma \mu$ where
\begin{equation}\label{p3r1}
   \sigma 
   (x_1,x_2) = 
\left\{
\begin{array}{ll}
  -\sgn (x_1) e_2. 
  & |x_1| \ge \frac{\sqrt2}{2}, \\
{-}\sgn(x_2) e_1, 
& |x_2 | \ge \frac{\sqrt2}{2},\\
 0, & |x_1|, |x_2| < \frac{\sqrt2}{2}, 
\end{array}
\right. 
\end{equation}
and $\mu = {\rho\cL^2 \LC \Omega}$, where 
\begin{equation}\label{df-ro}
\rho(x_1,x_2) = 
\left\{
\begin{array}{ll}
4|x_1|  , &  |x_1| \ge \frac{\sqrt2}2,\\
4 |x_2|,   & |x_2| \ge \frac{\sqrt2}2,\\
 0 & |x_1|, |x_2|, < \frac{\sqrt2}2 .
\end{array}
\right.
\end{equation}
Finally
{$\hat C$ is given by \eqref{CeoP}.}
\end{example}

\begin{proof}
We first notice that $Q\in \cM_{0,1}(\bar \Omega).$ However, we proceed in a different way than we did in the proof of Theorem \ref{th:main}. We first construct the optimal $\check p$, then we will look for $\hu.$

We recall  that $p\in \Sigma_Q $ if and only if $Q + \di p =0$. If $p=\sigma \mu$, where $\mu$ is a positive Radon measure and $\sigma\in L^\infty_\mu(\bar\Omega; \bR^N)$, then the distributional  definition of $\di p $ is
$$
\lb\di p, \varphi \rb = \lb\di (\sigma\mu), \varphi \rb = -  \int_{\bar\Omega}\nabla \varphi \cdot \sigma \,d\mu.
$$
In case $p\in \cM_T(\bar\Omega)$, then the above definition may be extended to $\varphi\in \Lip(\bar\Omega)$:
$$
\lb\di (\sigma\mu), \varphi \rb = -  \int_{\bar\Omega}\nabla_\mu \varphi \cdot \sigma \,d\mu.
$$
This means that in our case,  $\sigma \mu \in \Sigma_Q$ if  and only if
$$
0 = \lb Q, u \rb + \lb \di p , u \rb =
\int_{\partial B(0,1)} g(x_1,x_2) u\, d\cH^1 
 -
\int_{ B(0,1)} \sigma\cdot  \nabla_\mu u\, d\mu.
$$
We have just stated that $\di(\sigma\mu) = - Q$. We may read the above identity in a different way by using the theory of traces of measures, see \cite{Chen-Frid} and \cite{rybka-azg} in the context of the present paper. Namely, we may write,
$$
\Sigma_Q =\{ p \in \cM(\overline{\Omega}; \R^N): \di p = 0\hbox{ in }\Omega,\ p\cdot\nu = g(x_1,x_2)\hbox{ on }\d \Omega \}, 
$$
where $p\cdot\nu$ denotes the trace of the normal component of measure $p$ on $\partial \Omega$. If we pick a candidate for an optimal solution, we should check that indeed it belongs to $\cM_T(\overline{\Omega}; \R^N)$.

It is well-known, see \cite{GRS}, that the minimization problem
\begin{equation}\label{r11}
\min\{ \int_{\overline{\Omega}} d |p|:\  p \in \cM(\overline{\Omega};\R^N): \di p = 0\hbox{ in }\Omega,\ p\cdot\nu = h(x_1,x_2)\hbox{ on }\d B(0,1)\}
\end{equation}
is equivalent to 
\begin{equation}\label{r12}
\min\{\int d |D u|:\ u \in BV(\Omega),\ \gamma u = f\},    
\end{equation}
where $D$ denotes the distributional derivative od $u$. Here, $h= \frac{\d f}{\d\tau}$  and $\tau$ a tangent vector, $\nu$ the outer normal are such that $(\nu,\tau)$ is positively oriented. In the present case 
\begin{equation}\label{deff}
    f(x_1,x_2) = x_2^2- x_1^2.
\end{equation}

Since $f$ in (\ref{r12}) is continuous, we deduce from \cite{sternberg} that  there is a unique solution to \eqref{r12}. Moreover, if $v$ is a solution to \eqref{r12}, then after writing {$\check p=  \nabla v^\perp$,
where
${}^\perp$ denotes the rotation by $-\frac\pi2$, 
we obtain} a solution to \eqref{r11}, see \cite[Theorem 2.1]{GRS}. 

The solution to \eqref{r12} (with $f$ given by \eqref{deff}) is well-known (it is the Brothers' example). It is given by the following formula, see \cite{mazon},
$$
v(x_1,x_2)= \left\{
\begin{array}{ll}
1-2x_1^2 &  |x_1| \ge \frac{\sqrt2}{2},\\
2 x_2^2 -1& |x_2| \ge \frac{\sqrt2}{2}, \\
0 & |x_1|, |x_2| < \frac{\sqrt2}{2}. 
\end{array}
\right.
$$
Thus, after computing $\nabla v^\perp$, we  {want to write $\check p$ in the  following form $\check p =\sigma \mu$, where $|\sigma| =1 $ for $\mu$-a.e. $x\in \Omega$. We find $\sigma(x_1, x_2)$ 
according to (\ref{p3r1}) and
$\mu = \rho \cL^2 \LC \Omega$, where $\rho$ is given by \eqref{df-ro}}.

Now, we have to find a maximizer $\hat u$ of $\lb Q, u\rb$. We keep in mind that $\hat u$ is
{such that 
$|\check p|(\overline{\Omega}) = \lb \check p, \nabla \hat u \rb$. We notice that \eqref{p3r1} yields that the scalar product of vectors $\sigma$ 
and $\nabla_\mu \hu$ is equal to
$$
\sigma
(x_1,x_2) \cdot \nabla_\mu \hat u (x_1,x_2) 
= \left\{
\begin{array}{ll}
{\sigma}_2 \frac{\partial \hat u}{\partial x_2}  &  |x_1| \ge \frac{\sqrt2}{2},\\
{\sigma}_1 \frac{\partial \hat u}{\partial x_1} & |x_2| \ge \frac{\sqrt2}{2}, \\
0 & |x_1|, |x_2| < \frac{\sqrt2}{2}. 
\end{array}
\right.
$$
We see that {$\check p$ is an element of $\cM_T(\overline{\Omega})$ and due to the absolute continuity of $|\check p|$ we obtain,}
$$
\|\check p \| = \int_{B(0,1)} (|{\sigma}_1|\chi_{\{|x_1| \ge \frac{\sqrt2}{2}\}} +
|{\sigma}_2| \chi_{\{|x_2| \ge \frac{\sqrt2}{2}\}} )\,d\mu = \int_{B(0,1)} \sigma \cdot \nabla_\mu \hat u \,d\mu =
\lb \check p, \nabla \hat u \rb .
$$
In particular,  $|\check p|((-\frac{\sqrt2}{2}, \frac{\sqrt2}{2})^2) =0$ and $\|\check p \|  = \frac83\sqrt2$, {hence $\|Q\|_1 = \|\check p\|= \frac83\sqrt2.$}

However, the equality above is possible if and only if 
$\frac{\partial \hat u}{\partial x_2} = -\sgn(x_2)$ for $|x_2| \ge \frac{\sqrt2}{2}$ and $\frac{\partial \hat u}{\partial x_1} = -\sgn(x_1)$ for $|x_1| \ge \frac{\sqrt2}{2}$. We determine in a similar way the values of $\hu$ in the rest of $\Omega$. 
The only restriction is that 
function $\hu$ is in $\Lip_1(\bar \Omega)$. 
In particular}, we may set  $\hu$ as in \eqref{p3r2}.
{Then, one can check that $\lb Q,\hu\rb = \| \check p\|$ that confirms that the duality gap between problems (\ref{Z2}) and \eqref{eq36} vanishes}
Finally, we have to define the optimal $\hat C$. Due to formula \eqref{df-C}, we obtain,
$$
\hat A = \left\{
\begin{array}{ll}
e_2 \otimes e_2   &  |x_1| \ge \frac{\sqrt2}{2},\\
e_1 \otimes e_1  & |x_2| \ge \frac{\sqrt2}{2}, \\
0 & |x_1|, |x_2| < \frac{\sqrt2}{2}
\end{array}
\right.
$$ 
and 
\begin{equation}\label{CeoP}
{\hat C =\frac{3\sqrt2}{16} \Lambda_0 \hat A \,\rho\cL^2 \LC \Omega}.
\end{equation}
\end{proof}

\bigskip

In the next example we have the source $Q$ supported on a set of finite one-dimensional Hausdorff measure.
\begin{example}
Let us fix any $\Lambda_0>0$ and  suppose that $\Omega = (-1,1)^2$, $D^\pm = \{(x,\pm x):\ x\in (-1,1)\}$ and 
$$
Q= \cH^1\LC D^+ - \cH^1\LC D^-.
$$
Then, 
\begin{align*}
\hat u(x_1,x_2) & = \frac12(|x_1+x_2| -|x_1- x_2|),
\\
\check p &={\sqrt 2}\, \sgn(x_1) e_2 \cL^2 \LC \cC, \quad \text{where}\ \quad 
\cC =\{(x_1,x_2):\ |x_2|\le |x_1|\}, \\
\hat C &= {\frac{\Lambda_0} 2}  e_2 \otimes e_2 \cL^2 \LC \cC .
\end{align*}
\end{example}
\begin{proof}
If $u\in \Lip_1(\Omega)$, then 
$$
\lb Q, u\rb = \sqrt2\int_{-1}^1(u(x,x) - u(x,-x))\,dx \le 2\sqrt2 \int_{-1}^1|x|\,dx = 2\sqrt2.
$$
We notice that the equality above is achieved for {$\hat u(x_1,x_2) =\frac12(|x_1+x_2| -|x_1- x_2|)$. {Hence, $\|Q\|_1 = 2\sqrt2$.}

We wish to determine $\check p,$ {we will use Theorem \ref{th:main}, part 1. for this purpose, i.e. o}ur choice of $\check p$ should be such that $| \check p |(\overline{\Omega}) = \lb \check p, \nabla \hu \rb$, where
$$
\nabla \hu = \left\{
\begin{array}{ll}
 e_2    &  x_1 + x_2>0, \ x_1-x_2 >0,\\
 e_1   &  x_1 + x_2>0, \ x_1-x_2 <0,\\
 -e_2    &  x_1 + x_2<0, \ x_1-x_2 <0,\\
-e_1    &  x_1 + x_2<0, \ x_1-x_2 >0.\\
\end{array}
\right.
$$
We also want that $Q$ be represented as $Q = -\di \check p$, i.e.
$$
0= \lb Q, \hu\rb + \lb \di \check p, \hu \rb=
\int_{D^+} \hu \,d\cH^1 - \int_{D^-} \hu \,d\cH^1 - \int_{\bar\Omega}
\sigma \cdot \nabla_\mu \hu \, d\mu.
$$
{Since $|\nabla \hu| =1 $ $\cL^2$-a.e. in $\Omega$ we have a choice of the support of the measure $\mu$. We take $\cC$ as defined above, however, we could also consider $C_1 = \{(x_1, x_2) \in \Omega : |x_2| \ge |x_1|\}.$}

On the set $\cC$ the vector field $\sigma$ must be 
{equal to 
$\nabla \hu.$ We pick a simple choice for $\mu= k\cL^2\LC \cC.$}
If we take into account the form of $\nu$, the normal vector to $\partial \cC$, then by the Gauss formula we see that 
{\begin{eqnarray*}
|\check p|(\Omega) &= &\int_{\bar\Omega}
\sigma \cdot \nabla_\mu \hu \, d\mu =
\int_{\cC} k 
\, dx_1dx_2\\
&=& \int_{D^+\cup D^-} k \sigma\cdot \nu \hu \,d\cH^1=
k\frac{\sqrt2}2 \|Q\|_1.
\end{eqnarray*}
Since $\| \check p \| = \|Q\|_1,$ we deduce that}
$k = \sqrt 2$. 
Thus, $\check p = \sqrt 2\sgn(x_1)  e_2 \cL^2 \LC \cC$, as desired.
Finally, we find $\hat C = \frac{\Lambda_0}{2}
e_2 \otimes e_2 \cL^2 \LC \cC$.}
\end{proof}

\begin{remark}
 During the presentation of the above Example, we noticed that we could choose $\cC_1$ instead of $\cC$. In this way we would obtain a different solution to the minimization problem. As a result, we see that there is no unique solution to (\ref{df-Y}).
\end{remark}

\begin{example}
Let us take any $\Lambda_0>0$ and suppose that 
$$
S = \{(x,y)= {R} 
(\cos\theta , \sin\theta):\ \theta\in (\theta_0, \theta_1)\},
$$
where $\theta_0, \theta_1\in (0,2\pi)$, $R>0$
and $\Omega$ is any open, bounded set containing  $\hbox{\rm conv}\,(\{(0,0)\} \cup \overline{ S})$. We set
$$
Q = \frac{1}{\cH^1(S)} \cH^1\LC S - \delta_0.
$$
Then
$$
\hat u(x_1,x_2) =\sqrt{x_1^2 + x_2^2}{=:r},
$$
$$
\check p = {\frac 1{(\theta_1 - \theta_0)r}} 
e_r \cL^2 \LC \cK,
$$
where 
\begin{align*}
\cK &= \{ (x_1,x_2) = r(\cos\theta, \sin\theta):\ \theta\in [\theta_0, \theta_1], r\in [0, R 
]\}
\end{align*}
and
$$
\hat C = {\frac{\Lambda_0}{\cH^1(S) 
r}} e_r \otimes e_r
\cL^2 \LC \cK.
$$
\end{example}

\begin{proof}
In the present case, for $u\in\Lip_1(\overline{\Omega})$, we have
{$$
\lb Q, u \rb = \frac{1}{\cH^1(S)}\int_S (u(r,\theta) - u(0))\, d \cH^1 
\le \frac1{R(\theta_1-\theta_0)}\int_S {\sqrt{x^2_1 +x^2_2}}\, d\cH^1 =
R.
$$}
The value of the RHS above is achieved when $\hat u(x_1,x_2) =\sqrt{x_1^2 + x_2^2}$, so we have
$$
\|Q\|_1 = {R}. 
$$
Now, we are looking for an optimal $\check p = \sigma \mu$. Since $\nabla \hat u = e_r$, where $e_r(\theta) = (\cos \theta, \sin\theta)$, {then we take $\sigma = e_r$, so that we have} 
$$
\lb\check p, \nabla \hat u\rb = |\check p |(\Omega). 
$$
We expect that 
$$
\mu = 
\lambda (r) 
\cL^2 \LC \cK,
$$
where {$\lambda\in C^1([0,R])$.}
{Above all we need $\check p \in \Sigma_Q$,
\begin{equation}\label{rnb}
|\check p|(\Omega)= \lb\check p, \nabla \hat u\rb = \frac1{\cH^1(S)} \int_S \hu \,d\cH^1,
\end{equation}
i.e. 
$$
\di \check p = 0\qquad\hbox{ in } \cK.
$$
Since $\di \check p = \frac 1r \frac \partial{\partial r}\left( r\lambda(r))\right)$, we deduce that $\lambda = \frac kr,$ where $k\in \bR$. Then, \eqref{rnb} takes the form
$$
\int_\cK \frac k r \,dx_1 \,dx_2 = \int_{\theta_0}^{\theta_1} \int_0^R k \,drd\theta = k \cH^1(S) = \|Q\|_1.
$$
Thus,  $k = \|Q\|_1/\cH^1(S) = (\theta_1 - \theta_0)^{-1}$ and $\check p$ has the form we claimed.}
The choice of $\hat C$ follows from \eqref{df-C} and the elements we have.
\end{proof}

{
\begin{remark}
 The same methods works in a bit more general setting, when $S=\{(x,y)=r(\theta) e_r:\ \theta\in[\theta_0, \theta_1]\}$, where $r\in C^1[\theta_0, \theta_1]$ is positive. We could consider $Q$ given by
 $$
 Q = g \cH^1 \LC S - a \delta_0,
 $$
 where $g = e_r \cdot \nu$ and $\nu$ is the outer normal to $\cK$ at $S$. Set $\cK$ is defined as above and $a = \int_S g\, d\cH^1.$
\end{remark}}

Now, we consider $Q$ concentrated on a disconnected one-dimensional set. 
\begin{example}
Let $\Lambda_0>0$. 
We define $I = \{0\} \times [-1,1]$ and
$$
Q = \cH^1 \LC(I +g) - \cH^1 \LC (I-g),
$$
where $g=(1,1)$. We may assume that $\Omega$ is any bounded open set with smooth boundary containing the support of $Q$. Then
$$
\hat u (x_1,x_2) = \frac{\sqrt2}2 (x_1+x_2).
$$
$$
{\check p =  g \cL^2 \LC \cC 
\equiv \frac{\sqrt{2}}2(e_1 + e_2) \mu,
\quad \text{where} \quad \cC = \hbox{conv}\, (\supp Q),\quad \mu =\sqrt2\cL^2 \LC \cC}
$$
and
$$
\hat C = \frac{\Lambda_0}{{8}} \, g\otimes g  \cL^2 \LC \cC.
$$
\end{example}

\begin{proof}
We notice that if $u\in \Lip (\overline{\Omega})$, then
we have,
$$
\lb Q, u\rb = \int_{-1}^1 (u(1,y+1) - u(-1,y-1))\,dy \le \int_{-1}^1 |(2,2)|\,dy =4\sqrt 2.
$$
The inequality becomes equality for $\hat u (x_1,x_2) = \frac{\sqrt2}2 g\cdot (x_1,x_2) \equiv \frac{\sqrt2}2 (x_1+x_2).$ Indeed,
$$
\lb Q, \hu\rb =\frac{\sqrt2}2 \int_{-1}^1 (( 2+ y) -(y -2))\,dy =4 \sqrt 2.
$$
We see that $\nabla \hat u = \frac{\sqrt2}2 g$. Now, we look for $\check p\in \Sigma_Q$ such that
$\lb \check p , \nabla \hat u \rb = | \check p |(\Omega)$. We set 
{$$
\check p = \lambda g \cL^2 \LC \cC,
$$
where $\lambda >0$ and $\cC = \hbox{conv}\, (\supp Q)$. We see that
$$
\lb \check p , \nabla \hat u \rb = \sqrt 2\lambda \cL^2(\cC) = 4\sqrt2\lambda.
$$
As a result $\lambda = 1$.}
The choice of $\hat C$ follows from \eqref{df-C} and the elements we have.
\end{proof}




\subsection*{Acknowledgement}
The authors thank Michał Miśkiewicz for stimulating discussions on the subject of this paper. 
Moreover, the authors were in part supported 
by the National Science Centre, Poland. Here are  the grant numbers,
TL: 2019/33/B/ST8/00325, entitled:
{\it Merging the optimum design problems of structural topology and of the optimal choice of material characteristics.
The theoretical foundations and numerical methods};
PR:
2017/26/M/ST1/00700;
AZ-G: 2019/33/B/ST1/00535.

\end{document}